\def\@maketitle{%
  \newpage
  \null
  \vskip 2em%
  \begin{center}%
  \let \footnote \thanks
    {\Large\bfseries \@title \par}%
    \vskip 1.5em%
    {\normalsize
      \lineskip .5em%
      \begin{tabular}[t]{c}%
        \@author
      \end{tabular}\par}%
    \vskip 1em%
    {\normalsize \@date}%
  \end{center}%
  \par
  \vskip 1.5em}
		\newtheorem{theorem}{Theorem}[section]
		\newtheorem{lemma}[theorem]{Lemma}
		\newtheorem{proposition}[theorem]{Proposition}
		\newtheorem{corollary}[theorem]{Corollary}
		\newtheorem{definition}[theorem]{Definition}
		\newtheorem{remark}[theorem]{Remark}
		\newtheorem{question}[theorem]{Question}
		\newtheorem{example}[theorem]{Example}
	\newenvironment{proof}{
		\goodbreak\par
		\textit{Proof.}%
	}{%
		\nopagebreak
		\hfill{\vrule width 1ex height 1ex depth 0ex}
		\medskip
		\goodbreak
	}
	\newcommand{\sizedescriptor}[2]
	{
		\ifthenelse{\equal{#1}{0}}{}{
		\ifthenelse{\equal{#1}{1}}{\big}{
		\ifthenelse{\equal{#1}{2}}{\Big}{
		\ifthenelse{\equal{#1}{3}}{\bigg}{
		\ifthenelse{\equal{#1}{4}}{\Bigg}{
		#2}}}}}
	}
	\newcommand{\proven}[1]{\underline{#1}\vspace{0.2em}\\}
	\newcommand{\ie}[1][~]{i.e.{#1}}
	\newcommand{\df}[1]{{\bf{#1}}}
	\newcommand{\equ}{\sim}
	\newcommand{\dfeq}{\mathrel{\mathop:}=}
	\newcommand{\sepdfeq}{\quad\dfeq\quad}
	\newcommand{\impl}{\Rightarrow}
	\newcommand{\revimpl}{\Leftarrow}
	\newcommand{\lequ}{\Leftrightarrow}
	\newcommand{\all}[4][auto]{\forall\, #2 \,{\in}\, #3\,.\sizedescriptor{#1}{\left}({#4}\sizedescriptor{#1}{\right})}
	\newcommand{\xsome}[3]{\exists\, #1 \,{\in}\, #2\,.\,#3}
	\newcommand{\st}[3][auto]{\sizedescriptor{#1}{\left}\{#2\;\sizedescriptor{#1}{\middle}|\;#3\sizedescriptor{#1}{\right}\}}
	\newcommand{\NN}{\mathbb{N}}
	\newcommand{\RR}{\mathbb{R}}
	\newcommand{\el}[2][n]{\mathtt{el}_{#1,#2}}
	\newcommand{\mms}{\varsigma}
\title{Symmetric Polynomials in Upper-bound Semirings}
\date{}
\author{Sara Kali\v{s}nik}
\affil{Max Planck Institute for Mathematics in the Sciences, sara.kalisnik@mis.mpg.de}
\affil{Wesleyan University, skalisnikver@wesleyan.edu}
\author{Davorin Le\v{s}nik%
  \thanks{Electronic address: \texttt{davorin.lesnik@fmf.uni-lj.si}; This author was partially supported by the Air Force Office of Scientific Research, Air Force Materiel Command, USAF under Award No.~FA9550-14-1-0096.}}
\affil{Department of Mathematics, University of Ljubljana}
\begin{document}
	
	\maketitle
	\vspace{-1cm}
	\begin{abstract}
		The fundamental theorem of symmetric polynomials over rings is a classical result which states that every unital commutative ring is fully elementary, \ie we can express symmetric polynomials with elementary ones in a unique way. The result does not extend directly to polynomials over semirings, but we do have analogous results for some special semirings, for example, the tropical, extended and supertropical semirings. These all fall into a larger class of upper-bound semirings. In this paper we extend the known results and give a complete characterization of fully elementary upper-bound semirings. We further improve this characterization statement in the case of linearly ordered upper-bound semirings.
	\end{abstract}

	\section{Introduction}\label{SECTION: Introduction}
	
		The fundamental theorem of symmetric polynomials states that any symmetric polynomial in variables $x_1,\ldots , x_n$ over a unital commutative ring can be represented in a unique way as a polynomial in the elementary symmetric polynomials,
		\begin{align*}
			\el{1}(x_1, \ldots, x_n)  &= x_1 +\ldots + x_n\\
			\el{2}(x_1, \ldots, x_n) &= x_1 x_2 + x_1 x_3 + x_2 x_3 + \ldots + x_{n-1} x_n,\\
			\vdots\\
			\el{n}(x_1, \ldots, x_n) &= x_1 x_2 \ldots x_n.
		\end{align*}	
		
		This theorem has an interesting history~\cite{BSC}: Newton already extensively studied symmetric root polynomials as early as the mid-1660s, and was very likely aware of the fundamental theorem at that time. Over the course of the eighteenth century many mathematicians seemed familiar with it and used it. For example, Lagrange claimed it was `self-evident'. The existence part of the proof only appeared in 1771, when Vandermonde stated and proved the result in terms of roots and coefficients of a polynomial. The uniqueness aspect was still neglected at the time. The first complete proof is due to Dedekind.
		
		Given the long tradition and the impact of this theorem it is surprising that the question of elementarity over semirings (\ie the ability to express symmetric polynomials with elementary ones) only recently started being researched. The first paper on the topic analyzed the tropical semiring~\cite{symtrop}. The motivation for doing that came from the study of persistent homology~\cite{elz-tps-02, topodata}, a method which assigns a barcode, \ie a collection of intervals, to a finite metric space and can be used as a measurement of the shape of data. Studying symmetric tropical functions turned out to be crucial in identifying well-behaved coordinates on the space of barcodes~\cite{KV2016, MKV2016}.
		
		A positive result in the case of the tropical semiring led to a discussion about what happens in other semirings of interest to tropical algebraists, such as the symmetrized max-plus semiring~\cite{gaubertmaxplus}, the extended tropical semiring~\cite{extendedsemiring}, the supertropical semiring~\cite{Izhakian20102222, extendedsemiring, Izhakian2011}. In~\cite{KVL2017} we proved that supertropical semirings, including the extended tropical semiring, are elementary. The symmetrizations of the tropical and max-plus semiring, however, are not. 
		
		All of these semirings are upper-bound, so in~\cite{KVL2017} we already presented some partial results about elementarity for upper-bound semirings, though we have not yet obtained a proper characterization result. This is one of the main contributions of this paper.  
		
		This manuscript is organized as follows. In Section~\ref{SECTION: Preliminaries} we review the basic definitions needed to state and prove theorems. In Section~\ref{SECTION: Elementarity in Upper-bound Semirings} we study elementarity in upper-bound semirings. The central result in this section is Theorem~\ref{THEOREM: Characterization of elementary upper-bound semirings}, which gives a complete characterization of fully elementary upper-bound semirings as the Frobenius ones. In Section~\ref{SECTION: Elementarity in Linearly Ordered Semirings} we consider linearly ordered upper-bound semirings and improve the characterization in this case. In Section~\ref{SECTION: Towards General Elementarity} we consider what the scope of our results is from the perspective of general unital commutative semirings, rather than just the upper-bound ones.

	\section{Preliminaries}\label{SECTION: Preliminaries}
	
		Recall that $(X,+,0,\cdot)$ is a \df{semiring} when $(X,+,0)$ is a commutative monoid, $(X,\cdot)$ a semigroup, the multiplication $\cdot$ distributes over the addition $+$ and $0$ is an absorbing element, \ie $0 \cdot x = x \cdot 0 = 0$ for all $x \in X$. A semiring is \df{unital} when it has the multiplicative unit $1$. A semiring is \df{commutative} when $\cdot$ is commutative. A semiring is \df{idempotent} when $+$ is idempotent, \ie $x + x = x$ for all $x \in X$. A map between semirings is a semiring homomorphism when it preserves addition, additive unit and multiplication. If its domain and codomain are unital semirings, it is called a unital semiring homomorphism when it additionally preserves the multiplicative unit.
		
		We often omit the $\cdot$ sign in algebraic expressions, and shorten the product of $n$ many factors $x$ to $x^n$. Also, we write $X$ instead of $(X,+,0,\cdot)$ (or $(X,+,0,\cdot,1)$) when the operations are clear.
		
		Given a unital semiring $X$, we can view any natural number\footnote{We treat $0$ as a natural number, so $\NN = \{0, 1, 2, 3, \ldots\}$.} $n \in \NN$ as an element of $X$ in the usual way:
		\[n = \underbrace{1 + 1 + \ldots + 1}_{\text{$n$ times}}.\]
		However, this mapping (in fact, a unital semiring homomorphism) from $\NN$ to $X$ need not be injective; for example, if (and only if) $X$ is idempotent, we have $1 = 2$ in $X$.
		
		In this paper we almost exclusively deal with \df{unital commutative semirings} which we shorten to \df{uc-semirings}.
		
		Any commutative monoid $X$, and thus in particular any semiring, has an \df{intrinsic order} $\leq$ (see~\cite{KVL2017} ), given by
		\[a \leq b \sepdfeq \xsome{x}{X}{a + x = b}.\]
		The intrinsic order is a preorder (reflexive and transitive) with $0$ a least element. The operations $+$ and $\cdot$ are monotone: if $a \leq b$ and $c \leq d$, then $a + c \leq b + d$ and $a \cdot c \leq b \cdot d$. Also, any semiring homomorphism is monotone with respect to the intrinsic order.
		
		The following are the two most crucial properties of $\leq$ that we use in this paper. First, the intrinsic order is \df{directed}, in the sense that any two elements $a, b \in X$ have an upper bound, namely $a + b$. In other words, adding summands increases the sum. The second crucial property for us is antisymmetry of $\leq$. We do not have it in general, though, so we recall the definition~\cite{KVL2017}.
		
		\begin{definition}
			A semiring is \df{upper-bound} when its intrinsic order is antisymmetric (thus a partial order).
		\end{definition}
		
		A simple example of an upper-bound semiring is the set of natural numbers $\NN$, where the intrinsic order is the usual one. Note also that every idempotent semiring is upper-bound.
		
		Recall that any preorder $\leq$ defines an equivalence relation by
		\[a \approx b \ \dfeq \ a \leq b \land b \leq a.\]
		A preorder is (by definition) antisymmetric (hence, a partial order) when $\approx$ is equality. In general, a preorder on a set $X$ induces a partial order on the quotient set $X/_\approx$. If $X$ is a semiring and $\leq$ its intrinsic order, then $\approx$ is a congruence (that is, if $a \approx b$ and $c \approx d$, then $a + c \approx b + d$ and $a \cdot c \approx b \cdot d$), meaning that we can turn any semiring into an upper-bound semiring by taking the quotient $X/_\approx$.
		
		There is, of course, no shortage of non-upper-bound semirings. For example, if $X$ is a ring, then for every $a, b \in X$ we have $a \leq b$ (and so $a \approx b$). Thus the only upper-bound ring is the trivial one $\{0\}$.
		
		We now turn our attention to polynomials over semirings.
		
		\begin{definition}\label{DEFINITION: Polynomials in semirings}
			Let $X$ be a uc-semiring.
			\begin{itemize}
				\item
					Let $m, n, d_{1,1}, \ldots, d_{m,n}$ be natural numbers and $a_1, \ldots, a_m \in X$. A \df{monomial} is a syntactic object of the form $a_k \prod_{j = 1}^n x_j^{d_{k,j}}$. A \df{polynomial} is a sum of monomials $\sum_{k = 1}^m a_k \prod_{j = 1}^n x_j^{d_{k,j}}$. In this paper we only consider polynomials over commutative semirings, so we treat polynomials as equal if they differ only in the order of summands and/or factors.
				\item
					A \df{polynomial function} is the function $X^n \to X$ that a polynomial represents.
				\item
					A polynomial is \df{symmetric} when for each monomial $a_k \prod_{j = 1}^n x_j^{d_{k,j}}$ in it and each permutation $\sigma \in S_n$ the monomial $a_k \prod_{j = 1}^n x_{\sigma(j)}^{d_{k,j}}$ also appears in it, up to a change of the order of factors.\footnote{That is, we consider symmetry relative to commutativity of multiplication. For example, the polynomial $x y$ is symmetric: the transposition of variables gives $y x$ which we identify with $x y$. Of course, it would not make sense to consider $x y$ symmetric over a non-commutative semiring.} A polynomial function is \df{symmetric} when it can be represented by a symmetric polynomial.
			\end{itemize}
		\end{definition}
		
		\begin{remark}\label{REMARK: variants of symmetry}
			There are two reasonable definitions of when a polynomial function is symmetric: if it is represented by a symmetric polynomial (let us say that a function is `syntactically symmetric' in this case), or if its values are invariant under arbitrary permutations of variables, that is, $p(x_1, \ldots, x_n) = p(x_{\sigma(1)}, \ldots, x_{\sigma(1)})$ for all $(x_1, \ldots, x_n)$ and permutations $\sigma \in S_n$ (say that $p$ is `semantically symmetric'). Every syntactically symmetric polynomial function is semantically symmetric, but we do not know whether the converse holds (for more on the subject, see~\cite{KVL2017} ). For the purposes of our theorems, a `symmetric polynomial function' refers to syntactic symmetry (as stated in Definition~\ref{DEFINITION: Polynomials in semirings}), since our proofs require considering polynomials on the syntactic level. 
		\end{remark}
		
		Let us have a more formal description of a symmetric polynomial (function). For any $n \in \NN$ and $d = (d_1, \ldots, d_n) \in \NN^n$ consider the following subset of the symmetric group $S_n$:
		\[S_n^d \dfeq \st{\pi \in S_n}{\all[1]{i, j}{\{1, \ldots, n\}}{(d_i = d_j \land i \leq j) \implies \pi(i) \leq \pi(j)}}.\]
		Then define the \df{minimal symmetric segment}, corresponding to $d$, as
		\[\mms(d) \dfeq \sum_{\pi \in S_n^d} \prod_{j = 1}^n x_{\pi(j)}^{d_j}.\]
		The idea is that $\mms(d)$ represents the minimal polynomial which is symmetric and has monomials in which the exponents of variables are given by $d$. For example, we have $\mms(2,1) = x_1^2 x_2 + x_2^2 x_1$ and $\mms(1,1) = x_1 x_2$. However, for the sake of legibility we prefer to use variables $x, y, \ldots$ instead of $x_1, x_2, \ldots$ in concrete cases such as these, so we write for example $\mms(2,2,0,0) = x^2 y^2 + x^2 z^2 + x^2 w^2 + y^2 z^2 + y^2 w^2 + z^2 w^2$.
		
		Clearly $\mms(d)$ does not change if we permute the terms of $d$, so whenever necessary we can assume without loss of generality that $d$ is a decreasing sequence. This is helpful since if we view $\mms$ as a map from finite decreasing sequences to polynomials, it is injective.
		
		So how many terms does $\mms(d)$ have? From each group of terms which differ only by the order of factors we choose a single term (specifically, the one with the exponents in the order, given by $d$, and in which among the same exponents the variables have increasing indices). For example, $\mms(2,1) = x_1^2 x_2 + x_2^2 x_1$ contains one of the terms $x_1^2 x_2$, $x_2 x_1^2$ and one of the terms $x_2^2 x_1$, $x_1 x_2^2$. In essence, we are considering the permutations of multisets. That is, if we split $d$ into blocks of equal terms and the sizes of these blocks are $k_1, \ldots, k_j$, then $|S_n^d| = \frac{n!}{k_1! k_2! \ldots k_j!}$.
		
		The crucial observation here is the following: a polynomial (or a polynomial function) $p$ over a unital commutative semiring $X$ is symmetric if and only if it can be written as a linear combination of minimal symmetric segments, in the sense that
		\[p(x_1, \ldots, x_n) = \sum_{k = 1}^m a_k \, \mms(d_k)\]
		for some $m \in \NN$, $a_1, \ldots, a_m \in X$ and $d_1, \ldots, d_m \in \NN^n$.
		
		Recall that the fundamental theorem of symmetric polynomials over rings states that every symmetric polynomial function over a ring can be represented by a polynomial in elementary symmetric polynomials. Our main purpose in this paper is to prove a similar version of this theorem for upper-bound semirings.
		
		\begin{definition}
			Let $X$ be a uc-semiring.
			\begin{itemize}
				\item
					For any $n \in \NN$ and $k \in \NN_{\leq n}$ the corresponding \df{elementary symmetric polynomial} is defined as
					\[\el{k}(x_1, \ldots, x_n) \dfeq \mms(\underbrace{1, \ldots, 1}_{k \text{ many}}, \underbrace{0, \ldots, 0}_{n-k \text{ many}}).\]
					In other words, the elementary symmetric polynomials are precisely the minimal symmetric segments, given by (decreasing) finite binary sequences.\footnote{Note that $\el{0}$ is the constant $1$, and thus generally uninteresting in this context.}
				\item
					Given $n \in \NN$, we say that $X$ is \df{$n$-elementary} when for every symmetric polynomial $p$ in $n$ variables there exists a polynomial $r$ in $n$ variables, such that
					\[p(x_1, \ldots, x_n) = r\big(\el{1}(x_1, \ldots, x_n), \ldots, \el{n}(x_1, \ldots, x_n)\big)\]
					for all $x_1, \ldots, x_n \in X$ at the level of polynomial functions.
				\item
					$X$ is \df{fully elementary} when it is $n$-elementary for all $n \in \NN$.
			\end{itemize}
		\end{definition}
		
		Obviously, any uc-semiring is $0$-elementary and $1$-elementary. Note that higher elementarity implies lower one.
		
		\begin{proposition}\label{PROPOSITION: Higher elementarity implies lower one}
			Let a uc-semiring $X$ be $n$-elementary. Then it is also $m$-elementary for all $m \in \NN_{\leq n}$.
		\end{proposition}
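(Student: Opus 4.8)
The plan is the classical device of adjoining dummy variables and specializing them to $0$. Fix $m \in \NN_{\leq n}$ and let $p$ be a symmetric polynomial in the variables $x_1, \ldots, x_m$. Recall from the discussion preceding the definition of $\el{k}$ that a symmetric polynomial is exactly a linear combination of minimal symmetric segments, so we may write $p = \sum_{k = 1}^\ell a_k\, \mms(d_k)$ with $a_k \in X$ and $d_k \in \NN^m$. For $d = (d_1, \ldots, d_m) \in \NN^m$ put $d^+ \dfeq (d_1, \ldots, d_m, 0, \ldots, 0) \in \NN^n$, and set $\tilde p \dfeq \sum_{k = 1}^\ell a_k\, \mms(d_k^+)$, which is again a linear combination of minimal symmetric segments and hence a symmetric polynomial, now in the $n$ variables $x_1, \ldots, x_n$.

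Next I would record two substitution facts, where throughout ``substitution'' means setting $x_{m+1} = \cdots = x_n = 0$. (A) For every $d \in \NN^m$, substitution carries $\mms(d^+)$ to $\mms(d)$: the monomials of $\mms(d^+)$ correspond to the distinct rearrangements of the exponent vector $d^+$, exactly those placing all positive exponents among the first $m$ coordinates survive the substitution, and deleting the trailing (unit) factors identifies these bijectively with the rearrangements of $d$, \ie with the monomials of $\mms(d)$, each occurring once. (B) Since $\el{k}$ is the sum of all products of $k$ distinct variables among $x_1, \ldots, x_n$, substitution carries $\el{k}$ to $\el[m]{k}$ when $k \leq m$ and to $0$ when $k > m$. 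Applying (A) to each summand of $\tilde p$ gives
\[ \tilde p(x_1, \ldots, x_m, 0, \ldots, 0) = p(x_1, \ldots, x_m) \qquad \text{for all } x_1, \ldots, x_m \in X. \]

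Now I would invoke $n$-elementarity of $X$ for the symmetric polynomial $\tilde p$: there is a polynomial $r$ in $n$ variables with $\tilde p(x_1, \ldots, x_n) = r(\el{1}(x_1, \ldots, x_n), \ldots, \el{n}(x_1, \ldots, x_n))$ as polynomial functions on $X$. Performing the substitution on both sides and using (A) on the left and (B) on the right yields, for all $x_1, \ldots, x_m \in X$,
\[ p(x_1, \ldots, x_m) = r(\el[m]{1}(x_1, \ldots, x_m), \ldots, \el[m]{m}(x_1, \ldots, x_m), 0, \ldots, 0). \]
Setting $r'(y_1, \ldots, y_m) \dfeq r(y_1, \ldots, y_m, 0, \ldots, 0)$, a polynomial in $m$ variables, we conclude $p = r'(\el[m]{1}, \ldots, \el[m]{m})$ at the level of polynomial functions, which is precisely $m$-elementarity of $X$.

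The only step that needs genuine checking is Fact (A): one must verify that stripping off the trailing zero exponents gives a bijection between those distinct rearrangements of $d^+$ that use no variable beyond $x_m$ and the distinct rearrangements of $d$ --- in other words, the familiar stability of monomial symmetric polynomials under adjoining a variable and then setting it to zero. This amounts to routine bookkeeping with the multiset of exponents (the number of zero exponents gained in passing from $d$ to $d^+$ is exactly $n - m$, matching the $n - m$ variables being specialized); all the remaining steps are formal substitution together with the behaviour of $\el{k}$ recorded in (B).
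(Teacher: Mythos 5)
Your proof is correct and follows essentially the same route as the paper: pad the symmetric polynomial with dummy variables carrying zero exponents, apply the higher elementarity, and then specialize the extra variables to $0$, noting that the top elementary symmetric polynomials vanish under this substitution. The only cosmetic difference is that the paper descends one variable at a time by induction while you go from $n$ to $m$ in a single step, and you spell out the substitution facts (A) and (B) that the paper leaves implicit.
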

		
		\begin{proof}
			It suffices to show for any $n \in \NN$ that if $X$ is $(n+1)$-elementary, it is also $n$-elementary. To this end, take any symmetric polynomial $p$ in $n$ variables in $X$; we can write it in the form
			\[p(x_1, \ldots, x_n) = \sum_{k = 1}^m a_k \mms(d_{k,1}, \ldots, d_{k,n}).\]
			Define
			\[q(x_1, \ldots, x_n, x_{n+1}) \dfeq \sum_{k = 1}^m a_k \mms(d_{k,1}, \ldots, d_{k,n}, 0).\]
			Then $q$ is symmetric as well, so by assumption there exists a polynomial $r$ with
			\[q(x_1, \ldots, x_{n+1}) = r\big(\el[n+1]{1}(x_1, \ldots, x_{n+1}), \ldots, \el[n+1]{n+1}(x_1, \ldots, x_{n+1})\big).\]
			We get
			\[p(x_1, \ldots, x_n) = q(x_1, \ldots, x_n, 0) =\]
			\[= r\big(\el[n+1]{1}(x_1, \ldots, x_n, 0), \ldots, \el[n+1]{n}(x_1, \ldots, x_n, 0), x_1 \ldots x_n \cdot 0\big) =\]
			\[= r\big(\el{1}(x_1, \ldots, x_n), \ldots, \el{n}(x_1, \ldots, x_n), 0\big) =\]
			\[= s\big(\el{1}(x_1, \ldots, x_n), \ldots, \el{n}(x_1, \ldots, x_n)\big)\]
			for $s(x_1, \ldots, x_n) \dfeq r(x_1, \ldots, x_n, 0)$.
		\end{proof}

	\section{Elementarity in Upper-bound Semirings}\label{SECTION: Elementarity in Upper-bound Semirings}
	
		In~\cite{KVL2017} we showed that in idempotent semirings full elementarity is equivalent to the fulfillment of Frobenius equalities, with the purpose to have a characterization of elementarity for the tropical semiring and semirings related to it. In this section we significantly generalize this result to upper-bound semirings.
		
		Recall that the \df{Frobenius equality} for the exponent $n \in \NN$ in a uc-semiring $X$ states that $(x + y)^n = x^n + y^n$ for all $x, y \in X$. The Frobenius equality for $0$ states $1 = 1 + 1$, so it is equivalent to the idempotency of the semiring, but we do not want to restrict ourselves to those. As such, we consider only the Frobenius equalities for positive exponents.
		
		\begin{definition}\label{DEFINITION: Frobenius}
			A uc-semiring $X$ is \df{Frobenius} when it satisfies Frobenius equalities for all $n \in \NN_{\geq 1}$.
		\end{definition}
		
		Putting natural numbers into Frobenius equalities reveals some properties of a Frobenius semiring. We prepare a lemma for later.
		
		\begin{lemma}\label{LEMMA: Natural number equalities in a semiring}
			Let $X$ be a unital semiring.
			\begin{enumerate}
				\item
					If $2 = 3$ in $X$, then $2 = 4$ in $X$.
				\item
					If $2 = 4$ in $X$ and $X$ is upper-bound, then $2 = 3$ in $X$.
				\item
					If $X$ is Frobenius, then $2 = 4$ in $X$.
			\end{enumerate}
		\end{lemma}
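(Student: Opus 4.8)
The plan is to treat the three parts essentially independently, each by a short computation with the natural numbers $1, 2, 3, 4$ viewed as elements of $X$ via the unital semiring homomorphism $\NN \to X$ described in the Preliminaries.

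For (1), I would start from the hypothesis $2 = 3$ and add $1$ to both sides (addition is well-defined, so equals added to equals give equals), obtaining $3 = 4$; chaining with the hypothesis gives $2 = 3 = 4$, hence $2 = 4$. For (2), the point is to unwind the definition of the intrinsic order: from $2 + 1 = 3$ we get $2 \leq 3$, and from $3 + 1 = 4$ we get $3 \leq 4$, so $2 \leq 3 \leq 4$; the hypothesis $2 = 4$ closes this into $2 \leq 3 \leq 2$, i.e. $2 \approx 3$, and since $X$ is upper-bound (so $\approx$ is equality) we conclude $2 = 3$. For (3), I would simply instantiate the Frobenius equality for the exponent $n = 2$ — which holds by the assumption that $X$ is Frobenius — at $x = y = 1$: the left-hand side is $(1 + 1)^2 = 2^2 = 4$ and the right-hand side is $1^2 + 1^2 = 1 + 1 = 2$, so $2 = 4$.

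None of these steps is a genuine obstacle; the lemma is a collection of bookkeeping facts meant to be invoked later. The only place that needs a little care is (2), namely correctly reading the relations $2 \leq 3$ and $3 \leq 4$ off the witnessing equations $2 + 1 = 3$ and $3 + 1 = 4$ before applying antisymmetry. It is also worth noting, for later use, that (1) and (2) together say that over an upper-bound semiring the equalities $2 = 3$ and $2 = 4$ are equivalent, and that (3) then guarantees both hold whenever $X$ is Frobenius.
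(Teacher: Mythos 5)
Your proposal is correct and matches the paper's proof essentially step for step: part (1) by adding $1$ to $2=3$, part (2) by reading $2 \leq 3 \leq 4$ off the witnessing sums and applying antisymmetry, and part (3) by evaluating the Frobenius equality for $n=2$ at $x = y = 1$. No issues.
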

		
		\begin{proof}
			\begin{enumerate}
				\item
					Add $1$ to the equality $2 = 3$ and conclude $2 = 3 = 4$.
				\item
					We have $2 \leq 2 + 1 = 3 \leq 3 + 1 = 4 = 2$. Since $X$ is upper-bound, we infer $2 = 3$.
				\item
					Calculate $2 = 1^2 + 1^2 = (1 + 1)^2 = 4$.
			\end{enumerate}
		\end{proof}
		
		Note that if indeed $2 = 4$ in a unital semiring $X$, then any two $m, n \in \NN_{\geq 2}$ are equal whenever they have equal parity. If $2 = 3$, then all $m, n \in \NN_{\geq 2}$ are equal in $X$.
		
		We can now formulate and prove the first step towards the general theorem classifying elementarity in upper-bound uc-semirings. We calculate the product between an elementary symmetric polynomial $\el{k}$ and a minimal symmetric segment with at most $k$ non-zero exponents. It is instructive to start with an example.
		
		\begin{example}\label{EXAMPLE: Multiplying a minimal symmetric segment with an elementary symmetric polynomial}
			\[\mms(3,2,0,0) \cdot \mms(1,1,0,0) =\]
			\[= (x^3 y^2 + x^3 z^2 + x^3 w^2 + y^3 x^2 + y^3 z^2 + y^3 w^2 + z^3 x^2 + z^3 y^2 + z^3 w^2 + w^3 x^2 + w^3 y^2 + w^3 z^2) \cdot\]
			\[\cdot (x y + x z + x w + y z + y w + z w) =\]
			\[\text{\footnotesize (evaluate the 72 terms, collect them together to form minimal symmetric segments)}\]
			\[= \mms(4,3,0,0) + \mms(4,2,1,0) + 2 \cdot\mms(3,3,1,0) + \mms(3,2,1,1) =\]
			\[= \mms(3+1,2+1,0,0) + \mms(3+1,2,0+1,0) + 2 \cdot\mms(3,2+1,0+1,0) + \mms(3,2,0+1,0+1)\]
		\end{example}
		
		The point is, the product of $\mms(3,2,0,0)$ and $\mms(1,1,0,0)$ can be expressed as a linear combination (with natural numbers as coefficients) of minimal symmetric segments of sequences that we get by adding the terms of permutations of $(3,2,0,0)$ and $(1,1,0,0)$ and ordering the result to a decreasing sequence.
		
		Consider now the general case. Let $X$ be a uc-semiring, $n \in \NN$, $k \in \{1, \ldots, n\}$ and $d = (d_1, \ldots, d_n) \in \NN^n$ a decreasing sequence, for which all non-zero terms appear among the first $k$ terms (meaning, either $k = n$ or $k < n$ and $d_{k+1} = 0$).
		
		For each $j \in \{k, \ldots, n\}$ define $I^d_j$ to be the set of those binary sequences $\alpha \in \{0, 1\}^n$, such that:
		\begin{itemize}
			\item
				$\alpha$ has exactly $k$ many $1$s (therefore $n-k$ many $0$s),
			\item
				the index of the last $1$ in $\alpha$ is $j$,
			\item
				the (componentwise) sum $d + \alpha$ is a decreasing sequence.
		\end{itemize}
		Note that then for any $\alpha \in I^d_j$ and any $i \in \NN$ with $k < i \leq j$ we have $\alpha_i = 1$.
		
		Now, the polynomial $\mms(d) \cdot \el{k}$ is a product of two symmetric polynomials, and is therefore symmetric itself, meaning that we can write it as a linear combination of minimal symmetric segments. Consider an arbitrary term $x_{\pi(1)}^{d_1} \ldots x_{\pi(n)}^{d_n}$ from $\mms(d)$ and an arbitrary term $x_{\rho(1)} \ldots x_{\rho(k)}$ from $\el{k}$. Their product is the product of powers of variables, $n-k$ many of which have the exponent of the form $d_i$ and the remaining $k$ have the exponent of the form $d_i + 1$. We can always order the exponents decreasingly, so each such product of monomials appears in $\mms(d + \alpha)$ for some $j \in \{k, \ldots, n\}$ and $\alpha \in I^d_j$. A particular product of monomials can appear several times, but that number is necessarily a natural number since the coefficients of monomials in $\mms(d)$ and $\el{k}$ are all equal to $1$. That is, we have
		\[\mms(d) \cdot \el{k} = \sum_{j = k}^n \sum_{\alpha \in I^d_j} a_{j,\alpha} \, \mms(d + \alpha)\]
		for some $a_{j,\alpha} \in \NN$. Clearly every monomial in $\mms(d + \alpha)$ can actually be obtained, so necessarily $a_{j,\alpha} \geq 1$. Furthermore, note that there is only one way to get a term from $\mms(d + \alpha)$ when $j = k$, so we can write
		\[\mms(d) \cdot \el{k} = \mms(d_1 + 1, \ldots, d_k + 1, 0, \ldots, 0) + \sum_{j = k+1}^n \sum_{\alpha \in I^d_j} a_{j,\alpha} \mms(d + \alpha).\]
		
		We summarize our conclusions in the following lemma.
		
		\begin{lemma}\label{LEMMA: Factorization of elementary piece (general)}
			Let $X$ be a uc-semiring, $n \in \NN$, $k \in \{1, \ldots, n\}$ and $d = (d_1, \ldots, d_n) \in \NN^n$ a decreasing sequence, for which all non-zero terms appear among the first $k$ terms. Then
			\[\mms(d) \cdot \el{k} = \mms(d_1 + 1, \ldots, d_k + 1, 0, \ldots, 0) + \sum_{j = k+1}^n \sum_{\alpha \in I^d_j} a_{j,\alpha} \mms(d + \alpha)\]
			for some $a_{j,\alpha} \in \NN_{\geq 1}$.
		\end{lemma}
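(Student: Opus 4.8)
The claimed identity is one of formal polynomials (and therefore also holds at the level of polynomial functions), and the plan is to prove it simply by expanding the product $\mms(d) \cdot \el{k}$ completely and collecting like monomials. Since $\mms(d)$ and $\el{k}$ are symmetric, so is their product, and because every monomial of $\mms(d)$ and of $\el{k}$ has coefficient $1$, the product is a sum of minimal symmetric segments with natural-number coefficients; what remains is to identify which segments occur and with which coefficients.

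For the first part, I would observe that a monomial of the product is a product of a monomial $x_{\pi(1)}^{d_1}\cdots x_{\pi(n)}^{d_n}$ of $\mms(d)$ with a monomial $x_{\rho(1)}\cdots x_{\rho(k)}$ of $\el{k}$, so its exponent multiset arises from the multiset of $d$ by increasing exactly $k$ of the entries by $1$. Sorting the resulting exponents decreasingly gives a sequence $e$ with $d_i \leq e_i \leq d_i + 1$ for every $i$ (the left inequality because incrementing entries cannot lower any order statistic; the right one because $e_i \geq d_i + 2$ would force $d$ to contain $i$ entries that are $\geq d_i + 1 > d_i$, contradicting that $d_i$ is its $i$-th largest entry). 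Hence $e = d + \alpha$ for a unique $\alpha \in \{0,1\}^n$, and $\alpha$ has exactly $k$ ones, so the index $j$ of its last $1$ is at least $k$; together with the hypothesis that the non-zero terms of the decreasing sequence $d$ lie among its first $k$ positions, this puts $\alpha$ in $I^d_j$ for some $j \in \{k,\ldots,n\}$, the value $j = k$ corresponding exactly to $\alpha = (1,\ldots,1,0,\ldots,0)$ and $\mms(d+\alpha) = \mms(d_1+1,\ldots,d_k+1,0,\ldots,0)$. Distinct such $\alpha$ give distinct decreasing sequences $d + \alpha$, so no two segments are accidentally identified, and every monomial of each $\mms(d+\alpha)$ is actually attained: dividing a monomial $\prod_i x_{\sigma(i)}^{(d+\alpha)_i}$ of $\mms(d+\alpha)$ by the product of the $k$ variables $x_{\sigma(i)}$ with $\alpha_i = 1$ leaves $\prod_i x_{\sigma(i)}^{d_i}$, a monomial of $\mms(d)$, while that product of $k$ distinct variables is a monomial of $\el{k}$. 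This already yields $\mms(d)\cdot\el{k} = \sum_{j=k}^n \sum_{\alpha \in I^d_j} a_{j,\alpha}\,\mms(d+\alpha)$ with all $a_{j,\alpha} \in \NN_{\geq 1}$.

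The one step that needs a genuine, if short, argument — and which I expect to be the main obstacle — is that the coefficient of the $j = k$ segment $\mms(d_1+1,\ldots,d_k+1,0,\ldots,0)$ is exactly $1$. I would fix a monomial $T$ of this segment; since its exponents are $d_1+1,\ldots,d_k+1,0,\ldots,0$ and each $d_i + 1 \geq 1$, precisely $k$ of the variables occur in $T$ with positive exponent. In any factorization $T = M\cdot E$ with $M$ a monomial of $\mms(d)$ and $E$ a product of $k$ distinct variables coming from $\el{k}$, each of those $k$ variables must have positive exponent in $T$; as there are exactly $k$ such variables, $E$ is forced to be their product, and then $M = T/E$ is determined (and a quick check shows its exponent multiset is that of $d$). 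Hence the factorization is unique and the coefficient is $1$. Finally, when $k = n$ the remaining double sum is empty and the above specializes to $\mms(d)\cdot\el{n} = \mms(d_1+1,\ldots,d_n+1)$, so no separate treatment is needed, which completes the proof.
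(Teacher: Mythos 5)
Your proof is correct and follows essentially the same route as the paper, which establishes the identity by expanding the product termwise, observing that each resulting exponent multiset sorts to $d + \alpha$ for some $\alpha \in I^d_j$, and noting that the $j = k$ segment arises in exactly one way. You in fact supply more detail than the paper at the two points it treats as evident (that the sorted exponent sequence satisfies $d_i \leq e_i \leq d_i + 1$, and the uniqueness of the factorization giving coefficient $1$ for $j = k$), so nothing further is needed.
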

		
		We now restrict ourselves to Frobenius semirings.
		
		\begin{lemma}\label{LEMMA: Variant Frobenius}
			Let $X$ be an upper-bound Frobenius semiring and $n \in \NN_{\geq 2}$. Then
			\[x^n + x^{n-1} y + y^n \ = \ x^n + y^n\]
			for all $x, y \in X$.
		\end{lemma}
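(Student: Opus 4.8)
The plan is to trap the middle expression $x^n + x^{n-1}y + y^n$ between the two elements $x^n + y^n$ and $(x+y)^n$ in the intrinsic order, and then use the upper-bound hypothesis (antisymmetry) together with the Frobenius equality $(x+y)^n = x^n + y^n$ to collapse the sandwich.

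First I would establish the cheap inequality $x^n + y^n \leq x^n + x^{n-1}y + y^n$: this is immediate from the definition of the intrinsic order, since the right-hand side is obtained from the left by adding the summand $x^{n-1}y$ (here the assumption $n \geq 2$ simply guarantees that $x^{n-1}$ is a genuine power, so no appeal to a zeroth power is needed). Next I would expand $(x+y)^n$ by the binomial theorem, which holds in any commutative semiring because multiplication distributes over addition: $(x+y)^n = \sum_{k=0}^{n}\binom{n}{k}\,x^k y^{n-k}$, where each $\binom{n}{k}$ is read as the corresponding natural number inside $X$. Isolating the summands with $k=n$, $k=n-1$, $k=0$ and using $n \geq 2$, one can rewrite $(x+y)^n$ as $x^n + x^{n-1}y + y^n$ plus a remaining batch of summands (namely $(n-1)x^{n-1}y$ together with the middle terms $\binom{n}{k}x^k y^{n-k}$ for $1 \leq k \leq n-2$, the latter batch being empty when $n = 2$). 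Hence, again straight from the definition of the intrinsic order, $x^n + x^{n-1}y + y^n \leq (x+y)^n$.

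Finally, the Frobenius equality for exponent $n$ gives $(x+y)^n = x^n + y^n$, so we have $x^n + y^n \leq x^n + x^{n-1}y + y^n \leq x^n + y^n$, and antisymmetry of the intrinsic order — which is exactly the upper-bound hypothesis — forces $x^n + x^{n-1}y + y^n = x^n + y^n$, as desired.

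The only subtlety worth flagging is conceptual rather than computational: since semirings have no subtraction, one cannot directly ``cancel'' the extra binomial terms in $(x+y)^n = x^n + y^n$ to read off the claim; the whole point is to replace that cancellation by the order-theoretic sandwich and invoke antisymmetry. Beyond that, one should just be mildly careful that the coefficients in the binomial expansion are elements of $X$ (sums of $1$s), but since all coefficients that matter are positive, monotonicity of $+$ handles this without trouble; Lemma~\ref{LEMMA: Natural number equalities in a semiring} is not needed for this particular argument.
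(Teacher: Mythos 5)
Your proof is correct and is essentially identical to the paper's: both sandwich $x^n + x^{n-1}y + y^n$ between $x^n + y^n$ and the binomial expansion of $(x+y)^n$, then apply the Frobenius equality and antisymmetry of the intrinsic order. The paper's version is just a one-line chain of inequalities; your added remarks about the role of $n \geq 2$ and the absence of subtraction are accurate but not needed beyond that.
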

		
		\begin{proof}
			Since $X$ is Frobenius, we have
			\[x^n + y^n \leq x^n + x^{n-1} y + y^n \leq \sum_{k = 0}^n \binom{n}{k} x^{n-k} y^k = (x + y)^n = x^n + y^n.\]
			Since $X$ is upper-bound, we conclude $x^n + y^n = x^n + x^{n-1} y + y^n$.
		\end{proof}
		
		In Frobenius semirings we can symplify the formula from Lemma~\ref{LEMMA: Factorization of elementary piece (general)} significantly. To demonstrate how, we first continue the example from before, and afterwards prove the general result.
		
		\begin{example}\label{EXAMPLE: Multiplying a minimal symmetric segment with an elementary symmetric polynomial in a Frobenius semiring}
			Recall from Example~\ref{EXAMPLE: Multiplying a minimal symmetric segment with an elementary symmetric polynomial} that
			\[\mms(3,2,0,0) \cdot \mms(1,1,0,0) = \mms(4,3,0,0) + \mms(4,2,1,0) + 2 \cdot \mms(3,3,1,0) + \mms(3,2,1,1).\]
			We claim that, assuming the Frobenius property, we can eliminate all minimal symmetric segments on the right-hand side save the first one. We start from the back.
			
			The first term in $\mms(3,2,1,1)$ is $x^3 y^2 z w$. We also have terms $x^4 y^2 z$ and $w^4 y^2 z$ in the earlier segment $\mms(4,2,1,0)$. With Lemma~\ref{LEMMA: Variant Frobenius} we can calculate
			\[x^4 y^2 z + x^3 y^2 z w + w^4 y^2 z = y^2 z (x^4 + x^3 w + w^4) = y^2 z (x^4 + w^4) = x^4 y^2 z + w^4 y^2 z.\]
			We have managed to eliminate one term of $\mms(3,2,1,1)$ and with the same kind of reasoning we can eliminate others as well, removing this segment altogether.
			
			Next, consider the term $x^3 y^3 z$ from $\mms(3,3,1,0)$. We can eliminate it with the terms $x^4 y^3$ and $z^4 x^3$ from $\mms(4,3,0,0)$. The segment $\mms(3,3,1,0)$ (and in particular the term $x^3 y^3 z$) is multiplied by $2$, but we can just do the elimination twice. Considering also other relevant permutations of variables, we eliminate the whole $2 \cdot \mms(3,3,1,0)$.
			
			Finally, eliminate $x^4 y^2 z$ in $\mms(4,2,1,0)$ with the help of terms $x^4 y^3$ and $x^4 z^3$ from $\mms(4,3,0,0)$, and similarly for the rest of $\mms(4,2,1,0)$. In the end we get just $\mms(3,2,0,0) \cdot \mms(1,1,0,0) = \mms(4,3,0,0)$.
		\end{example}
		
		We now tackle the general case.
		
		\begin{lemma}\label{LEMMA: Factorization of elementary piece (Frobenius)}
			Let $X$ be an upper-bound Frobenius semiring, $n \in \NN$, $k \in \{1, \ldots, n\}$ and $d = (d_1, \ldots, d_n) \in \NN^n$ a decreasing sequence, for which all non-zero terms appear among the first $k$ terms. Then
			\[\mms(d) \cdot \el{k} = \mms(d_1 + 1, \ldots, d_k + 1, 0, \ldots, 0).\]
		\end{lemma}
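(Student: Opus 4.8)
The plan is to build on Lemma~\ref{LEMMA: Factorization of elementary piece (general)}, which already exhibits $\mms(e) := \mms(d_1+1,\ldots,d_k+1,0,\ldots,0)$ as one summand of $\mms(d)\cdot\el{k}$, and to show that the upper-bound Frobenius hypothesis forces every remaining summand $\mms(d+\alpha)$, $\alpha\in I^d_j$ with $j>k$, to be absorbed by $\mms(e)$ in the strong sense that $\mms(e)+\mms(d+\alpha)=\mms(e)$. Once this is known, absorbing the summands in the formula of Lemma~\ref{LEMMA: Factorization of elementary piece (general)} one at a time --- and using that $\mms(e)+c\cdot\mms(d+\alpha)=\mms(e)$ for every $c\in\NN$ as soon as $\mms(e)+\mms(d+\alpha)=\mms(e)$ --- collapses the whole sum to $\mms(e)$, which is the assertion. (When $k=n$ there are no indices $j>k$ and the lemma is already Lemma~\ref{LEMMA: Factorization of elementary piece (general)}, so assume $k<n$.)

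The work is thus concentrated in the single absorption $\mms(e)+\mms(d+\alpha)=\mms(e)$, which I will reach by chaining elementary moves. The elementary move is this: if a monomial $\mu$ can be written $\mu=P\,u^{N-1}v$ with $N\geq 2$ and with $u\neq v$ variables not occurring in the monomial $P$, then multiplying the identity of Lemma~\ref{LEMMA: Variant Frobenius} by $P$ gives $P u^N+\mu+P v^N=P u^N+P v^N$; hence, if $P u^N$ and $P v^N$ both occur among the monomials of a polynomial $q$, then $q+\mu=q$. Now fix $\alpha\in I^d_j$ with $j>k$, put $m=j-k$, let $p_1<\ldots<p_m$ list the positions in $\{1,\ldots,k\}$ with $\alpha_{p}=0$, and let $q_s=k+s$ (so $q_1,\ldots,q_m$ are exactly the positions in $\{k+1,\ldots,j\}$, all of which carry $\alpha_{q}=1$). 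Define $\gamma^{(0)}=\alpha$ and let $\gamma^{(s)}$ arise from $\gamma^{(s-1)}$ by flipping the $p_s$-entry from $0$ to $1$ and the $q_s$-entry from $1$ to $0$; then $\gamma^{(m)}=(1,\ldots,1,0,\ldots,0)$, so $d+\gamma^{(m)}=e$. Writing $N_s=d_{p_s}+1$, the multiset of exponents of $d+\gamma^{(s)}$ is obtained from that of $d+\gamma^{(s-1)}$ by replacing one entry $N_s-1$ by $N_s$ and one entry $1$ by $0$; since $\mms$ depends only on the multiset of exponents, every monomial of $\mms(d+\gamma^{(s-1)})$ has the form $P\,u^{N_s-1}v$ with $P u^{N_s}$ and $P v^{N_s}$ monomials of $\mms(d+\gamma^{(s)})$, so the elementary move yields $\mms(d+\gamma^{(s)})+\mms(d+\gamma^{(s-1)})=\mms(d+\gamma^{(s)})$ for each $s\in\{1,\ldots,m\}$. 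A descending induction on $s$, using the directedness of the intrinsic order together with its antisymmetry (this is the only place the upper-bound hypothesis is invoked), upgrades this chain of equalities to $\mms(e)+\mms(d+\gamma^{(s)})=\mms(e)$ for all $s<m$, in particular for $s=0$, and $\gamma^{(0)}=\alpha$.

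I expect two points to be the real content rather than bookkeeping. The first is the need for $N_s=d_{p_s}+1\geq 2$, that is $d_{p_s}\geq 1$, so that Lemma~\ref{LEMMA: Variant Frobenius} may be applied; this is exactly where the decreasing hypothesis on $d$ and the decreasing clause in the definition of $I^d_j$ enter, for if $d_{p_s}=0$ then $d$ vanishes from position $p_s$ on, so decreasingness of $d+\alpha$ forces $\alpha_{p_s}\geq\alpha_j=1$, contradicting $\alpha_{p_s}=0$. The second is the boundary case $N_s=2$ (that is $d_{p_s}=1$), in which $u$ and $v$ both carry exponent $1$ in $\mu$: here one must note that the exponent multiset of $d+\gamma^{(s-1)}$ contains the value $1$ at least twice --- once from position $p_s$ and once from position $q_s$ --- so two distinct such variables are available and the decomposition $\mu=P\,u\,v$ is legitimate. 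The remaining verifications are routine: that the $p_s$- and $q_s$-entries of $\gamma^{(s-1)}$ are still $0$ and $1$ since earlier steps touched only $p_1,\ldots,p_{s-1}$ and $q_1,\ldots,q_{s-1}$, and that the two helper monomials $P u^{N_s}$, $P v^{N_s}$ (which are distinct, as $N_s\geq 2$) indeed lie in $\mms(d+\gamma^{(s)})$, which holds by the multiset computation just made.
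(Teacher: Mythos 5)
Your proof is correct, and it runs on the same engine as the paper's: Lemma~\ref{LEMMA: Factorization of elementary piece (general)} to expand the product, and Lemma~\ref{LEMMA: Variant Frobenius} multiplied by a common monomial factor $P$ to annihilate, three terms at a time, the monomials of the unwanted segments --- the three terms corresponding to moving one unit of exponent between the variable sitting at a position $\leq k$ and the variable sitting at a position $> k$ of the binary vector. The difference lies in the bookkeeping of the cancellation. The paper works down from $j = n$ to $j = k+1$ and cancels each segment $\mms(d+\alpha)$, $\alpha \in I^d_j$, against terms ``borrowed'' from a segment $\mms(d+\beta)$ with $\beta \in I^d_{j-1}$ that is already present in the sum with coefficient $a_{j-1,\beta} \geq 1$ (the borrowing returns those terms, so the lower levels are unchanged). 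You instead prove the standalone absorption identity $\mms(d_1+1,\ldots,d_k+1,0,\ldots,0) + \mms(d+\alpha) = \mms(d_1+1,\ldots,d_k+1,0,\ldots,0)$ by chaining one-swap absorptions $\mms(d+\gamma^{(s)}) + \mms(d+\gamma^{(s-1)}) = \mms(d+\gamma^{(s)})$ along a path of binary vectors from $\alpha$ to the indicator of $\{1,\ldots,k\}$, and then absorb every unwanted segment directly into the leading term. This buys a somewhat cleaner argument: the intermediate vectors $\gamma^{(s)}$ need not yield decreasing sequences or appear in the sum at all, and you never have to track that the coefficients of the lower segments remain $\geq 1$ during the elimination; the price is the verification (which you carry out correctly) that $d_{p_s} \geq 1$ so that Lemma~\ref{LEMMA: Variant Frobenius} applies at each swap, including the boundary case $d_{p_s} = 1$. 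One small inaccuracy: your claim that the final chaining step is ``the only place the upper-bound hypothesis is invoked'' is off, since upper-boundedness already enters through Lemma~\ref{LEMMA: Variant Frobenius} itself, and the chaining of the absorption identities is in fact pure additive algebra needing no order argument at all; this does not affect correctness.
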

		
		\begin{proof}
			By Lemma~\ref{LEMMA: Factorization of elementary piece (general)} we have
			\[\mms(d) \cdot \el{k} = \mms(d_1 + 1, \ldots, d_k + 1, 0, \ldots, 0) + \sum_{j = k+1}^n \sum_{\alpha \in I^d_j} a_{j,\alpha} \mms(d + \alpha)\]
			for some $a_{j,\alpha} \in \NN_{\geq 1}$. If $k = n$, we are done, so assume hereafter that $k < n$.
			
			We claim that we can remove the summands with $j \geq k+1$, first for the largest $j$, then working down. Here is one step of the procedure. Take any $j$ with $k+1 \leq j \leq n$ and any $\alpha \in I^d_j$. Let $i \in \{1, \ldots, n\}$ denote the first index, for which $\alpha_i = 0$. Such $i$ exists because $k < n$, and we have $i < j$ since $j \geq k+1$. Define $\beta$ to be the same finite binary sequence as $\alpha$, except with the $i$-th and $j$-th term switched. Observe that then $\beta \in I^d_{j-1}$.
			
			Consider any monomial $x_{\pi(1)}^{d_1 + \alpha_1} \ldots x_{\pi(n)}^{d_n + \alpha_n}$ in $\mms(d + \alpha)$. Let $\rho$ denote the same permutation as $\pi$, except with the $i$-th and $j$-th value switched. Then both $x_{\pi(1)}^{d_1 + \beta_1} \ldots x_{\pi(n)}^{d_n + \beta_n}$ and $x_{\rho(1)}^{d_1 + \beta_1} \ldots x_{\rho(n)}^{d_n + \beta_n}$ appear in $\mms(d + \beta)$.
			
			Since $a_{j-1,\beta} \geq 1$, we can ``borrow'' these two terms from $a_{j-1,\beta} \, \mms(d + \beta)$; denoting
			\[P \dfeq x_{\pi(1)}^{d_1 + \alpha_1} \ldots x_{\pi(1)}^{d_{i-1} + \alpha_{i-1}} \cdot x_{\pi(i)}^{d_j} \cdot x_{\pi(1)}^{d_{i+1} + \alpha_{i+1}} \ldots x_{\pi(1)}^{d_{j-1} + \alpha_{j-1}} \cdot x_{\pi(j)}^{d_j} \cdot x_{\pi(1)}^{d_{j+1} + \alpha_{j+1}} \ldots x_{\pi(1)}^{d_n + \alpha_n}\]
			and using Lemma~\ref{LEMMA: Variant Frobenius}, we get
			\[x_{\pi(1)}^{d_1 + \beta_1} \ldots x_{\pi(n)}^{d_n + \beta_n} \ \ + \ \ x_{\pi(1)}^{d_1 + \alpha_1} \ldots x_{\pi(n)}^{d_n + \alpha_n} \ \ + \ \ x_{\rho(1)}^{d_1 + \beta_1} \ldots x_{\rho(n)}^{d_n + \beta_n} =\]
			\[= P \cdot \left(x_{\pi(i)}^{d_i - d_j + 1} + x_{\pi(i)}^{d_i - d_j} x_{\pi(j)} + x_{\pi(j)}^{d_i - d_j + 1}\right) = P \cdot \left(x_{\pi(i)}^{d_i - d_j + 1} + x_{\pi(j)}^{d_i - d_j + 1}\right) =\]
			\[= x_{\pi(1)}^{d_1 + \beta_1} \ldots x_{\pi(n)}^{d_n + \beta_n} \ \ + \ \ x_{\rho(1)}^{d_1 + \beta_1} \ldots x_{\rho(n)}^{d_n + \beta_n}.\]
			The term $x_{\pi(1)}^{d_1 + \alpha_1} \ldots x_{\pi(n)}^{d_n + \alpha_n}$ has vanished! If we do that for all terms in $\mms(d + \alpha)$, we have effectively decreased $a_{j,\alpha}$ by one, while only using terms from $(j-1)$-segments (which remained unchanged). Keep doing that for all $j$ between $k+1$ and $n$ from largest to smallest until all those $a_{j,\alpha}$ are reduced to zero. In the end, we are left with just
			\[\mms(d) \cdot \el{k} = \mms(d_1 + 1, \ldots, d_k + 1, 0, \ldots, 0).\]
		\end{proof}
		
		\begin{lemma}\label{LEMMA: Elementary factorization of a segment}
			Let $X$ be an upper-bound Frobenius semiring, $n \in \NN$ and $d = (d_1, \ldots, d_n) \in \NN^n$ a decreasing sequence. Then
			\[\mms(d) = \el{1}^{d_1 - d_2} \cdot \ldots \cdot \el{n-1}^{d_{n-1} - d_n} \cdot \el{n}^{d_n}.\]
		\end{lemma}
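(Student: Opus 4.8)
The plan is to prove the identity by induction on $\sum_{i = 1}^n d_i$, stripping off one elementary symmetric polynomial at a time by using the case $k = j$ of Lemma~\ref{LEMMA: Factorization of elementary piece (Frobenius)} ``in reverse''. For the base case $d = (0, \ldots, 0)$ the only permutation in $S_n^d$ is the identity, so $\mms(d) = x_1^0 \cdots x_n^0 = 1$, which agrees with the empty product $\el{1}^0 \cdot \ldots \cdot \el{n}^0 = 1$ on the right.

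For the inductive step assume $d \neq (0, \ldots, 0)$ and let $k \in \{1, \ldots, n\}$ be the largest index with $d_k \geq 1$, so that $d_{k+1} = \ldots = d_n = 0$. Set $d' \dfeq (d_1 - 1, \ldots, d_k - 1, 0, \ldots, 0)$. Since $d$ is decreasing and $d_k \geq 1$, the tuple $d'$ is again a decreasing sequence in $\NN^n$ whose non-zero entries all lie in the first $k$ positions, so Lemma~\ref{LEMMA: Factorization of elementary piece (Frobenius)} applies to $d'$ and yields $\mms(d') \cdot \el{k} = \mms(d_1, \ldots, d_k, 0, \ldots, 0) = \mms(d)$. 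Because $\sum_i d'_i = \sum_i d_i - k$, the induction hypothesis gives $\mms(d') = \el{1}^{d'_1 - d'_2} \cdot \ldots \cdot \el{n-1}^{d'_{n-1} - d'_n} \cdot \el{n}^{d'_n}$, and it remains only to compare consecutive differences. Writing $d_{n+1} \dfeq 0$ so that the exponent of $\el{n}$ reads uniformly as $d_n - d_{n+1}$, one checks that $d'_i - d'_{i+1} = d_i - d_{i+1}$ for every $i \neq k$, while $d'_k - d'_{k+1} = (d_k - d_{k+1}) - 1$ (the only point calling for a small case split, according to whether $k < n$ or $k = n$). Multiplying $\mms(d')$ by the extra factor $\el{k}$ therefore raises precisely the $k$-th exponent by one, producing $\mms(d) = \el{1}^{d_1 - d_2} \cdot \ldots \cdot \el{n-1}^{d_{n-1} - d_n} \cdot \el{n}^{d_n}$.

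I expect the only real work to be bookkeeping: verifying that $d'$ meets the hypotheses of Lemma~\ref{LEMMA: Factorization of elementary piece (Frobenius)} --- in particular that it stays decreasing, which is exactly what $d_k \geq 1$ secures --- and tracking the exponents through the single extra multiplication. An essentially equivalent route is to argue ``forwards'': starting from $\mms(0, \ldots, 0) = 1$, multiply successively by $\el{1}^{d_1 - d_2}, \el{2}^{d_2 - d_3}, \ldots, \el{n}^{d_n}$, invoking Lemma~\ref{LEMMA: Factorization of elementary piece (Frobenius)} at each individual step; this corresponds to the decomposition $d = \sum_{k = 1}^n (d_k - d_{k+1})(\underbrace{1, \ldots, 1}_{k}, 0, \ldots, 0)$ of a decreasing sequence into indicator tuples, and the invariant to maintain along the way is that the current sequence remains decreasing with its non-zero entries confined to an initial segment.
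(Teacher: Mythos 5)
Your proposal is correct and follows essentially the same route as the paper: repeatedly peeling off the highest-index elementary symmetric polynomial $\el{k}$ via Lemma~\ref{LEMMA: Factorization of elementary piece (Frobenius)} until one reaches $\mms(0,\ldots,0)=1$, with your induction on $\sum_i d_i$ being a formalization of the paper's iterative description. The bookkeeping of the exponent differences $d'_i - d'_{i+1}$ is carried out correctly.
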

		
		\begin{proof}
			Use Lemma~\ref{LEMMA: Factorization of elementary piece (Frobenius)} as an induction step to first factor out $\el{n}$ from $\mms(d)$ as many times as possible (which is $d_n$ many), then $\el{n-1}$ from the remainder as many times as possible (which is $d_{n-1} - d_n$), and so on. In the end we are left with $\mms(0, \ldots, 0)$, which is equal to $1$.
		\end{proof}
		
		We are now ready to prove the central theorem in the paper.
		
		\begin{theorem}[Characterization of elementary upper-bound semirings]\label{THEOREM: Characterization of elementary upper-bound semirings}
			The following statements are equivalent for any upper-bound uc-semiring $X$.
			\begin{enumerate}
				\item
					$X$ is Frobenius.
				\item
					$X$ is fully elementary.
				\item
					$X$ is $n$-elementary for some $n \in \NN_{\geq 2}$.
				\item
					$X$ is $2$-elementary.
			\end{enumerate}
		\end{theorem}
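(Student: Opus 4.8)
The plan is to prove the cycle $(1) \Rightarrow (2) \Rightarrow (3) \Rightarrow (4) \Rightarrow (1)$. The implication $(1) \Rightarrow (2)$ is already essentially packaged in the preceding lemmas: given $n \in \NN$ and a symmetric polynomial $p$ in $n$ variables, write $p = \sum_{k=1}^{m} a_k \, \mms(d_k)$ with each $d_k \in \NN^n$ a decreasing sequence (possible since $\mms(d)$ is unchanged by permuting the terms of $d$); Lemma~\ref{LEMMA: Elementary factorization of a segment} rewrites each $\mms(d_k)$ as a monomial in $\el{1}, \ldots, \el{n}$, and collecting these monomials into a single polynomial $r$ in $n$ new variables yields $p(x_1, \ldots, x_n) = r\big(\el{1}(x_1, \ldots, x_n), \ldots, \el{n}(x_1, \ldots, x_n)\big)$ at the level of polynomial functions. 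Since $n$ was arbitrary, $X$ is fully elementary. Then $(2) \Rightarrow (3)$ is trivial (take $n = 2$), and $(3) \Rightarrow (4)$ follows from Proposition~\ref{PROPOSITION: Higher elementarity implies lower one} since $2 \leq n$.

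The substance lies in $(4) \Rightarrow (1)$, and this is where antisymmetry of the intrinsic order is indispensable. The Frobenius equality for $n = 1$ is automatic, so fix $n \geq 2$. The polynomial $\mms(n, 0) = x^n + y^n$ is symmetric in two variables, so $2$-elementarity yields a polynomial $r$ in two variables with $x^n + y^n = r(x + y, xy)$ as an equality of polynomial functions on $X^2$. Substituting $(x, y) = (a, b)$ gives $a^n + b^n = r(a + b, ab)$, and substituting $(x, y) = (a + b, 0)$ gives $(a + b)^n = r(a + b, 0)$ (using $0^n = 0$, as $n \geq 1$). Writing $r$ as a sum of monomials $c_l\, u^{\alpha_l} v^{\beta_l}$, those with $\beta_l \geq 1$ vanish when $v = 0$, so $r(u, 0) \leq r(u, v)$ for all $u, v \in X$, because adding summands increases the sum. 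Hence $(a + b)^n = r(a + b, 0) \leq r(a + b, ab) = a^n + b^n$, while the reverse inequality $a^n + b^n \leq (a + b)^n$ holds in any semiring since the binomial expansion of $(a + b)^n$ contains $a^n$ and $b^n$ among its summands. As $X$ is upper-bound, $\leq$ is antisymmetric, so $(a + b)^n = a^n + b^n$; since $a, b$ were arbitrary, $X$ is Frobenius.

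I do not expect a genuine obstacle: the forward direction is carried entirely by Lemmas~\ref{LEMMA: Factorization of elementary piece (Frobenius)} and~\ref{LEMMA: Elementary factorization of a segment}, and the converse is the short two-sided estimate above. The one point demanding care in $(4) \Rightarrow (1)$ is that $2$-elementarity is an identity of polynomial \emph{functions} rather than of formal polynomials; this is precisely what legitimizes the pointwise substitutions $(a, b)$ and $(a + b, 0)$ into $x^n + y^n = r(x + y, xy)$, and without it the argument would not go through.
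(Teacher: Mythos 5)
Your proposal is correct and follows essentially the same route as the paper: $(1\Rightarrow 2)$ via Lemma~\ref{LEMMA: Elementary factorization of a segment}, the middle implications as stated, and $(4\Rightarrow 1)$ by writing $x^n+y^n=r(x+y,xy)$, substituting $(a+b,0)$, and squeezing $a^n+b^n\leq (a+b)^n=r(a+b,0)\leq r(a+b,ab)=a^n+b^n$ before invoking antisymmetry. The paper's version of this last step is identical in substance (it phrases the substitution as ``set $y$ to $0$ and replace $x$ with $x+y$''), so there is nothing to add.
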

		
		\begin{proof}
			\begin{itemize}
				\item\proven{$(1 \impl 2)$}
					Any symmetric polynomial can be written as a linear combination of minimal symmetric segments, and any such segment can be written as a product of elementary symmetric polynomials by Lemma~\ref{LEMMA: Elementary factorization of a segment}.
				\item\proven{$(2 \impl 3)$}
					By definition.
				\item\proven{$(3 \impl 4)$}
					By~Proposition~\ref{PROPOSITION: Higher elementarity implies lower one}.
				\item\proven{$(4 \impl 1)$}
					Take any $n \in \NN_{\geq 1}$. The polynomial $x^n + y^n$ is symmetric, so by assumption we can write
					\[x^n + y^n = \sum_{k = 1}^m a_k (x y)^{i_k} (x + y)^{j_k}.\]
					This holds for all $x, y \in X$, and we can choose to set $y$ to $0$ and replace $x$ with $x + y$ which gives us
					\[(x + y)^n = \sum_{k \in \{1, \ldots, m\}, i_k = 0} a_k (x + y)^{j_k}.\]
					Hence
					\[x^n + y^n \leq \sum_{k = 0}^n \binom{n}{k} x^{n-k} y^k = (x + y)^n = \sum_{k \in \{1, \ldots, m\}, i_k = 0} a_k (x + y)^{j_k} \leq\]
					\[\leq \sum_{k = 1}^m a_k (x y)^{i_k} (x + y)^{j_k} = x^n + y^n\]
					(note that we needed $n \geq 1$ for the first step in this chain). $X$ is upper-bound, so we conclude $x^n + y^n = (x + y)^n$.
			\end{itemize}
		\end{proof}
		
		In short, an upper-bound uc-semiring is fully elementary if and only if it is Frobenius. It is important that the semiring is upper-bound --- we do not have the equivalence in general. According to the fundamental theorem of symmetric polynomials over rings, all uc-rings are elementary, but they are not Frobenius in general (take for example the ring of real numbers $\RR$). We discuss this more in detail in Section~\ref{SECTION: Towards General Elementarity}.

	\section{Elementarity in Linearly Ordered Semirings}\label{SECTION: Elementarity in Linearly Ordered Semirings}
	
		In the previous section we characterized the fully elementary upper-bound semirings as the Frobenius ones. However, many interesting upper-bound semirings are linearly ordered\footnote{Recall that $X$ is linearly ordered by $\leq$ when all $x, y \in X$ are comparable, \ie $x \leq y$ or $y \leq x$.} by their intrinsic order --- in fact, our starting motivation was to study elementarity in semirings such as the tropical and the extended tropical semiring, which are linearly ordered. In this section we improve the characterization for such semirings.
		
		To set the stage, we first recall the basic facts about the \df{ghost ideal}~\cite{Izhakian20102222, extendedsemiring, KVL2017}, defined for any semiring $X$ as the subset of elements which satisfy the idempotency condition:
		\[\nu{X} \dfeq \st{a \in X}{a = a + a}.\]
		Clearly $0 \in \nu{X}$ and if $a, b \in \nu{X}$, then $a + b \in \nu{X}$. Also, if $a \in \nu{X}$ and $x \in X$, then $x \cdot a \in \nu{X}$ and $a \cdot x \in \nu{X}$. In short, $\nu{X}$ is indeed a semiring ideal in $X$. We can now immediately conclude from the definition that $\nu{X}$ is an idempotent semiring.
		
		Define the map $\nu\colon X \to X$ by $\nu(x) \dfeq x + x$. Clearly $\nu$ is an additive monoid homomorphism, although it is not necessarily a semiring homomorphism (take for example $X = \NN$). Also, $\nu$ is a monotone map (with regard to the intrinsic order) and we have $x \leq \nu(x)$ for all $x \in X$.
		
		By definition, $\nu{X}$ is the set of fixed points of $\nu$. Note that if $X$ is unital (so we have natural numbers in $X$), we can write $\nu(x) = 2x$ and $\nu{X} = \st{a \in X}{a = 2a}$.
		
		The consideration of ghost ideals arose from the study of \df{supertropical semirings}~\cite{Izhakian20102222, extendedsemiring, KVL2017}, which we will get to shortly. But first, a more general notion.
		
		\begin{definition}\label{DEFINITION: Quasiidempotent semirings}
			A semiring $X$ is \df{quasiidempotent} when $x + x + x + x = x + x$ for all $x \in X$.
		\end{definition}
		
		Clearly $\nu$ can be used to characterize quasiidempotent semirings.
		
		\begin{proposition}\label{PROPOSITION: Characterization of quasiidempotent semirings}
			Let $X$ be a semiring. The following statements are equivalent.
			\begin{enumerate}
				\item
					$X$ is quasiidempotent.
				\item
					$\nu$ is a projection (\ie $\nu \circ \nu = \nu$).
				\item
					The image of $\nu$ is $\nu{X}$.
				\item
					The image of $\nu$ is an idempotent subsemiring in $X$.
			\end{enumerate}
		\end{proposition}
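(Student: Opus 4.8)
The plan is to close the loop $(1) \impl (2) \impl (3) \impl (4) \impl (2)$ and additionally check $(2) \impl (1)$, which together make all four conditions equivalent. The equivalence $(1) \lequ (2)$ will be a one-line computation: for every $x \in X$ we have $\nu(\nu(x)) = \nu(x + x) = (x + x) + (x + x) = x + x + x + x$, while $\nu(x) = x + x$. Hence $\nu \circ \nu = \nu$ holds as an identity of maps exactly when $x + x + x + x = x + x$ for all $x$, which is precisely the definition of quasiidempotency.

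For $(2) \impl (3)$ I would show that, once $\nu$ is idempotent, its image coincides with its set of fixed points. If $y = \nu(x)$ lies in the image, then $\nu(y) = \nu(\nu(x)) = \nu(x) = y$, so $y$ is a fixed point, i.e. $y \in \nu{X}$; conversely every $a \in \nu{X}$ satisfies $a = a + a = \nu(a)$, so it lies in the image (this inclusion needs no hypothesis at all). Then $(3) \impl (4)$ is immediate from the facts recalled just before Definition~\ref{DEFINITION: Quasiidempotent semirings}: $\nu{X}$ is a semiring ideal in $X$, in particular a subsemiring, and it is idempotent; so if the image of $\nu$ equals $\nu{X}$, it is an idempotent subsemiring.

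Finally, for $(4) \impl (2)$, assume the image of $\nu$ is an idempotent subsemiring. For arbitrary $x \in X$ the element $\nu(x) = x + x$ lies in the image, so idempotency gives $\nu(x) + \nu(x) = \nu(x)$, that is $\nu(\nu(x)) = \nu(x)$; since $x$ was arbitrary, $\nu \circ \nu = \nu$. I do not expect a genuine obstacle in this proposition — each implication is a short direct verification — and the only point deserving a moment's attention is that in $(4) \impl (2)$ one uses nothing about the image beyond its idempotency, the subsemiring structure being irrelevant for that direction.
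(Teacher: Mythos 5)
Your proof is correct: each implication checks out against the definitions ($\nu(\nu(x)) = x+x+x+x$ versus $\nu(x)=x+x$ for $(1)\lequ(2)$, the fixed-point/image comparison for $(2)\impl(3)$, the previously established fact that $\nu{X}$ is an idempotent subsemiring for $(3)\impl(4)$, and idempotency of the image applied to $\nu(x)$ for $(4)\impl(2)$). The paper dismisses this proposition with ``Easy,'' so there is no recorded argument to contrast with; your direct verification is exactly the kind of routine check the authors had in mind.
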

		
		\begin{proof}
			Easy.
		\end{proof}
		
		When we have a multiplicative unit, we can extend the characterization of quasiidempotent semirings.
		
		\begin{proposition}\label{PROPOSITION: Characterization of unital quasiidempotent semirings}
			Let $X$ be a unital semiring. The following statements are equivalent.
			\begin{enumerate}
				\item
					$X$ is quasiidempotent.
				\item
					$2 = 4$ in $X$.
				\item
					$\nu$ is a semiring homomorphism.
				\item
					$\nu{X}$ has a multiplicative unit and the restriction $\nu\colon X \to \nu{X}$ is a unital semiring homomorphism.
			\end{enumerate}
		\end{proposition}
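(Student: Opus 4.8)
The strategy is to route all four conditions through the single numerical identity $2 = 4$ in $X$: I will prove $(1)\lequ(2)$, then $(2)\lequ(3)$, and finally $(2)\lequ(4)$, so that each direction becomes essentially a one-line substitution, matching the ``Easy.'' spirit of the preceding proposition.

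For $(1)\lequ(2)$: if $X$ is quasiidempotent, substituting $x\dfeq 1$ into $x+x+x+x = x+x$ gives $4 = 2$; conversely, if $2 = 4$, then for any $x\in X$ distributivity yields $x+x+x+x = (1+1+1+1)\cdot x = (1+1)\cdot x = x+x$. For $(2)\impl(3)$: the map $\nu$ always preserves $+$ and $0$, and one has $\nu(xy) = xy + xy$ while $\nu(x)\nu(y) = (x+x)(y+y) = xy+xy+xy+xy$; multiplying $2=4$ by $xy$ shows these coincide, so $\nu$ is a semiring homomorphism. For $(3)\impl(2)$: a semiring homomorphism $\nu$ applied to $1 = 1\cdot 1$ gives $\nu(1) = \nu(1)\,\nu(1)$, that is $2 = 2\cdot 2 = 4$ (note this direction uses only multiplicativity, not preservation of the multiplicative unit).

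The one part needing slightly more care is $(2)\lequ(4)$. Assuming $2 = 4$, I claim $\nu(1) = 2$ is the multiplicative unit of the ghost ideal $\nu{X}$: it lies in $\nu{X}$ because $2+2 = 4 = 2$, and for any $a\in\nu{X}$ the defining relation $a = a+a$ gives $2\cdot a = a+a = a$ (and symmetrically $a\cdot 2 = a$), so $2$ is a two-sided identity there. Moreover, since $(2)\lequ(1)$, Proposition~\ref{PROPOSITION: Characterization of quasiidempotent semirings} tells us the image of $\nu$ is exactly $\nu{X}$, so the corestriction $\nu\colon X\to\nu{X}$ is well-defined; it is a semiring homomorphism by $(3)$ and sends $1$ to $2$, the unit of $\nu{X}$, hence is a unital semiring homomorphism. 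For $(4)\impl(2)$, a unital semiring homomorphism into the subsemiring $\nu{X}\subseteq X$ is in particular a semiring homomorphism $X\to X$, so $(3)$ holds and thus $(2)$; alternatively one reads off $2 = \nu(1) = \nu(1)^2 = 4$ directly from unit preservation. The main obstacle, such as it is, is purely bookkeeping in $(2)\impl(4)$: one must not use $1$ as the unit of $\nu{X}$ (it need not lie there) but recognize $\nu(1)=2$ as the correct unit, and one must invoke the previous proposition rather than reprove that $\nu$ corestricts to $\nu{X}$.
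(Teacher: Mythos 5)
Your proposal is correct and follows essentially the same route as the paper: both reduce everything to the identity $2=4$, verify $\nu(x)\nu(y)=4xy=2xy=\nu(xy)$ for $(2)\impl(3)$, obtain $(3)\impl(2)$ from $\nu(1\cdot 1)=\nu(1)\nu(1)$, and identify $\nu(1)=2$ as the unit of $\nu X$ for statement $(4)$. Your treatment of $(2)\impl(4)$ is slightly more explicit than the paper's (which just checks $2\cdot\nu(x)=\nu(x)$ on the image of $\nu$), but the content is the same.
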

		
		\begin{proof}
			\begin{itemize}
				\item\proven{$(1 \impl 2)$}
					Apply quasiidempotency for the unit $1$.
				\item\proven{$(2 \impl 1)$}
					If $2 = 4$, then $2x = 4x$ for any $x \in X$.
				\item\proven{$(2 \impl 3)$}
					We know that $\nu$ is an additive monoid homomorphism. It remains to check that $\nu$ preserves multiplication: $\nu(x) \cdot \nu(y) = 2x \cdot 2y = 4xy = 2xy = \nu(xy)$.
				\item\proven{$(3 \impl 2)$}
					$2 = \nu(1) = \nu(1 \cdot 1) = \nu(1) \cdot \nu(1) = 2 \cdot 2 = 4$.
				\item\proven{$(2 \land 3 \impl 4)$}
					We have $\nu(1) = 2$ and $2 \cdot \nu(x) = 4x = 2x = \nu(x)$.
				\item\proven{$(4 \impl 3)$}
					Yes.
			\end{itemize}
		\end{proof}
		
		Quasiidempotent semirings are useful because they allow us to view $X$ as a bundle\footnote{Here we use the term `bundle' in the most general sense --- as any map, where we treat the domain as a disjoint union of fibers.} over $\nu{X}$, with $\nu$ as the bundle projection. This bundle has a section $\nu{X} \hookrightarrow X$ and both $\nu$ and this inclusion preserve the semiring operations and the intrinsic order. This allows us to pass between the total space and the base of the bundle, often reducing issues from $X$ to the more manageable (since it is an idempotent semiring) $\nu{X}$.
		
		In light of this view, let us examine fibers of $\nu$ closer.
		
		\begin{lemma}\label{LEMMA: Fibers of nu}
			Let $X$ be a quasiidempotent semiring.
			\begin{enumerate}
				\item
					Given $a \in X$, the fiber $\nu^{-1}(a)$ is non-empty if and only if $a \in \nu{X}$. In that case $a$ is the largest element of $\nu^{-1}(a)$.
				\item
					Suppose $X$ is upper-bound. For any $x \in X$ and $a \in \nu{X}$, if $x \leq a$, then $x + a = a$.\footnote{As a special case, when $X$ is idempotent (therefore upper-bound, $1 = 2$ and $\nu{X} = X$), we get that $a \leq b \lequ a + b = b$ for all $a, b \in X$.}
			\end{enumerate}
		\end{lemma}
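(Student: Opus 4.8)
The plan is to treat the two items separately, relying only on the elementary facts about $\nu$ and the intrinsic order already collected: that $\nu$ is an additive monoid homomorphism, that $x \leq \nu(x)$ for every $x$, that $\nu{X}$ is exactly the fixed-point set of $\nu$, and --- crucially for item~1 --- that quasiidempotency is equivalent to $\nu \circ \nu = \nu$, equivalently to $\im \nu = \nu{X}$, by Proposition~\ref{PROPOSITION: Characterization of quasiidempotent semirings}.

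For item~1, the \emph{if} direction is immediate: if $a \in \nu{X}$ then $\nu(a) = a + a = a$, so $a$ itself lies in $\nu^{-1}(a)$ and the fiber is non-empty. For the \emph{only if} direction, suppose $\nu(x) = a$ for some $x \in X$; then $a = \nu(x) \in \im \nu = \nu{X}$ by quasiidempotency (or one computes directly $a + a = (x+x)+(x+x) = x+x+x+x = x+x = a$). It then remains to see that, when $\nu^{-1}(a)$ is non-empty, $a$ is its largest element: we have just shown $a \in \nu^{-1}(a)$, and for any $x \in \nu^{-1}(a)$ the inequality $x \leq \nu(x) = a$ holds, so $a$ is an element of the fiber that dominates every element of the fiber.

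For item~2, assume $X$ is upper-bound, $a \in \nu{X}$ and $x \leq a$. Monotonicity of $+$ together with $a = a + a$ gives $x + a \leq a + a = a$, while directedness of the intrinsic order (adding a summand only increases the sum) gives $a \leq a + x = x + a$. Antisymmetry of $\leq$, \ie the upper-bound hypothesis, then forces $x + a = a$.

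There is no serious obstacle here; the points requiring a little care are invoking quasiidempotency in the form $\im \nu = \nu{X}$ in item~1 (so that any value of $\nu$ is automatically a fixed point), and, in item~2, recognising that the conclusion is obtained by squeezing $x + a$ between $a$ and $a$ --- monotonicity on one side, directedness on the other --- after which the upper-bound property supplies antisymmetry. Note that the argument needs neither unitality nor, for item~1, the upper-bound hypothesis.
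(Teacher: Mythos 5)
Your proof is correct. Item~1 is essentially the paper's own argument: observe that any value of $\nu$ is a fixed point (either via $\im\nu = \nu X$ from Proposition~\ref{PROPOSITION: Characterization of quasiidempotent semirings} or by computing $a + a = 4x = 2x = a$), and that every $x$ in the fiber satisfies $x \leq \nu(x) = a$. Item~2 is also a squeeze-plus-antisymmetry argument in both cases, but your chain is different and slightly cleaner: you bound $x + a \leq a + a = a$ directly by monotonicity and the fixed-point identity $a = a + a$, whereas the paper writes $x \leq a$ as $x + y = a$ and runs $a \leq x + a \leq 2x + 2a = 4x + 2y = 2x + 2y = 2a = a$, which genuinely invokes quasiidempotency in the step $4x + 2y = 2x + 2y$. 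Your route shows that item~2 holds for any upper-bound semiring and any $a$ with $a = a + a$, with no quasiidempotency needed at all --- a marginally more general statement obtained more economically. The cost is nil; nothing in the rest of the paper depends on the particular chain used here.
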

		
		\begin{proof}
			\begin{enumerate}
				\item
					Suppose we have $x \in \nu^{-1}(a)$; then $2a = 4x = 2x = a$, so $a \in \nu^{-1}(a)$. Conversely, if $a = 2a$, then $a \in \nu^{-1}(a)$.
					
					Take any $x \in \nu^{-1}(a)$. Then $x \leq 2x = a$.
				\item
					Since $x \leq a$, there exists $y \in X$ with $x + y = a$. We then have
					\[a \leq x + a \leq 2x + 2a = 4x + 2y = 2x + 2y = 2a = a.\]
					Since $X$ is upper-bound, we conclude $a = x + a$.
			\end{enumerate}
		\end{proof}
		
		We now recall the definition of supertropical semirings~\cite{Izhakian201661, KVL2017}.
		
		\begin{definition}
			A \df{supertropical semiring} $X$ is a quasiidempotent uc-semiring which satisfies the following for all $a, b \in X$:
			\begin{itemize}
				\item
					if $\nu(a) \neq \nu(b)$, then $a + b \in \{a, b\}$,
				\item
					if $\nu(a) = \nu(b)$, then $a + b = \nu(a) = \nu(b)$.
			\end{itemize}
		\end{definition}
		
		Finally, we get to linearly ordered upper-bound semirings. In fact, the intrinsic order is the only order on semirings that we consider in this paper; thus, any time we refer to a linearly ordered semiring, we mean linearly ordered by its intrinsic order.
		
		In any linearly ordered upper-bound semiring $X$ we can introduce the \df{strict order relation} $<$ as one would expect. For $a, b \in X$ define $a < b$ in either of the following equivalent ways:
		\[a \leq b \land a \neq b \quad \iff \quad \lnot(b \leq a).\]
		The relation $<$ is irreflexive, asymmetric and transitive. Also, $<$ satisfies the law of trichotomy: for any $a, b \in X$ exactly one of the statements $a < b$, $a = b$, $b < a$ holds.
		
		\begin{lemma}\label{LEMMA: Fibers of nu, linear version}
			Let $X$ be a linearly ordered upper-bound quasiidempotent unital semiring.
			\begin{enumerate}
				\item
					For any $a \in \nu{X}$ the fiber $\nu^{-1}(a)$ contains either exactly one or exactly two elements.
				\item
					For all $x, y \in X$ we have $x = y$ if and only if
					\[\nu(x) = \nu(y) \quad\land\quad \big(x \in \nu{X} \iff y \in \nu{X}\big).\]
				\item
					If $x, y \in X$, $y \notin \nu{X}$ and $x < y$, then $2x < y$.
				\item
					For any $x, y \in X$, if $x < y$, then $x + y = y$.
			\end{enumerate}
		\end{lemma}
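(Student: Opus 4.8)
I would prove part~1 first and bootstrap the other three from it: parts~2 and~3 follow from part~1 almost immediately, and part~4 follows from parts~1--3 together with a short computation involving $\nu$ (which is a semiring homomorphism by Proposition~\ref{PROPOSITION: Characterization of unital quasiidempotent semirings}, so in particular $2(2p) = 2p$ and $\nu$ sends a sum to a join of elements of $\nu{X}$). The real work is part~1. Since $\nu^{-1}(a)$ always contains $a$ as its largest element (Lemma~\ref{LEMMA: Fibers of nu}), the content of part~1 is that it has \emph{at most} two elements, and I would argue by contradiction, assuming distinct $x < y$ in $\nu^{-1}(a)$ with $x \neq a \neq y$ --- so $x, y \notin \nu{X}$ and $2x = 2y = a$, with $x < y < a$. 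One preliminary fact carries most of the argument: for any $u < v$ in $\nu^{-1}(a)$ with $v \neq a$, associativity forces $u + v = a$, since a witness $u + z = v$ yields $u + v = (u + u) + z = a + z = a$, the last step by Lemma~\ref{LEMMA: Fibers of nu}(2) because $z \leq v < a$. Then the key loop: given $u < v$ in $\nu^{-1}(a)$ with $u \neq a \neq v$, choose any witness $p$ with $u + p = v$; necessarily $p < u$ (if $p \geq u$ then $v = u + p \geq 2u = a$, contradicting $v < a$), and $p \notin \nu^{-1}(a)$ (otherwise $u + p = a \neq v$ by the preliminary fact), so $2p \neq a$; applying $\nu$ to $u + p = v$, and using that sums of elements of $\nu{X}$ are their maxima, gives $\max(2p, a) = a$, hence $2p < a$. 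Now choose a witness $q$ with $p + q = u$; applying $\nu$ gives $\max(2p, 2q) = 2u = a$, and since $2p < a$ this forces $2q = a$, \ie $q \in \nu^{-1}(a)$; moreover $q < u$ (if $q = u$ then $p + u = u$, but $p + u = v \neq u$). So $(q, u)$ is again a pair of elements of $\nu^{-1}(a)$ below $a$ with $q < u < v$, and iterating produces an infinite strictly descending sequence inside $\nu^{-1}(a)$. \textbf{Extracting an outright contradiction from this is the crux.} The hypotheses permit infinite descending chains in general, so one must use the extra data --- the non-ghost witnesses $p$, each with $2p \in \nu{X}$ and $2p < a$, together with quasiidempotency (\eg $3p = p + 2p = 2p$, from Lemma~\ref{LEMMA: Fibers of nu}(2)) --- to show that such a descending family cannot sit below the single top element $a$. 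I expect this to be the delicate, decisive step.

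Granting part~1, parts~2 and~3 are short. For part~2 the forward implication is trivial; for the converse, suppose $\nu(x) = \nu(y)$ and that $x$ and $y$ are both in, or both outside, $\nu{X}$. If both lie in $\nu{X}$ then $x = 2x = \nu(x) = \nu(y) = 2y = y$. If both lie outside, set $a = \nu(x) = \nu(y) \in \nu{X}$; then $x$ and $y$ are elements of $\nu^{-1}(a)$ different from $a$, and part~1 leaves room for only one such, so $x = y$. For part~3, let $y \notin \nu{X}$ and $x < y$. Then $2x = \nu(x) \in \nu{X}$ while $y \notin \nu{X}$, so $2x \neq y$; by trichotomy it suffices to exclude $y < 2x$. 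If $y < 2x$, applying the monotone map $\nu$ to $x \leq y$ and to $y < 2x$ sandwiches $2x \leq 2y \leq 2(2x) = 2x$, so $2x = 2y$, and then $x$, $y$ and $2x$ are three distinct elements of $\nu^{-1}(2x)$, contradicting part~1. Hence $2x < y$.

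Finally, part~4. If $x \in \nu{X}$, a witness $x + z = y$ gives $x + y = (x + x) + z = x + z = y$ (using $x + x = 2x = x$). If $y \in \nu{X}$, then $x \leq y \in \nu{X}$ gives $x + y = y$ by Lemma~\ref{LEMMA: Fibers of nu}(2). Suppose then $x, y \notin \nu{X}$. Part~1 forbids $2x = 2y$ (else $x$, $y$, $2x$ would be three distinct elements of one fiber), so $2x < 2y$, and since $\nu$ is a homomorphism, $\nu(x + y) = 2x + 2y = 2y$ by Lemma~\ref{LEMMA: Fibers of nu}(2) (as $2x < 2y \in \nu{X}$); that is, $\nu(x+y) = \nu(y)$. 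I claim $x + y \notin \nu{X}$: otherwise $x + y = 2y$, so $2x + y = x + (x + y) = x + 2y = 2y$ (Lemma~\ref{LEMMA: Fibers of nu}(2), as $x \leq 2y \in \nu{X}$); but $2x \in \nu{X}$ and $2x < y$ by part~3, so the first case above applied to the pair $(2x, y)$ gives $2x + y = y$, whence $y = 2y$, contradicting $y \notin \nu{X}$. Therefore $x + y \notin \nu{X}$, and since $\nu(x+y) = \nu(y)$ and $y \notin \nu{X}$, part~2 gives $x + y = y$.
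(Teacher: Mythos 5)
Your parts 2, 3 and 4 are correct given part 1 (your case analysis in part 4 is a bit longer than necessary but sound), and your reduction of everything to part 1 matches the structure of the paper's proof. The problem is that your proof of part 1 is not a proof: you construct, from a hypothetical pair $x < y < a$ in $\nu^{-1}(a)$, an infinite strictly descending chain inside the fiber, and then you explicitly stop, saying that ``extracting an outright contradiction from this is the crux'' and that you ``expect this to be the delicate, decisive step.'' That expectation is correct --- an infinite descending chain is not by itself absurd in these semirings (the tropical semiring is linearly ordered, upper-bound and idempotent, and has plenty of them), so the descent buys you nothing until you produce an actual equality that contradicts strictness. Since part 1 is the load-bearing item from which you derive 2--4, the whole lemma is left unproved.

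The missing step is short, and your own setup already contains all the ingredients. The paper does not iterate at all: from $x < y < a$ in $\nu^{-1}(a)$ take a witness $u$ with $x + u = y$; one shows $2u < a$ (you did the analogous step for your $p$) and then, crucially, $2u < x$ --- for if $x \leq 2u$ then $a = 2x \leq 4u = 2u < a$. Writing $x = 2u + v$ and using $2 = 3$ (Lemma~\ref{LEMMA: Natural number equalities in a semiring}, available because quasiidempotency gives $2=4$ and $X$ is upper-bound) yields $y = x + u = 3u + v = 2u + v = x$, contradicting $x < y$. The same computation closes your loop at its first iteration: you established $2p < a$ for the witness $p$ of $u + p = v$, the same argument upgrades this to $2p < u$, and then $v = u + p = 3p + w = 2p + w = u$ for any $w$ with $2p + w = u$. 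So the idea you flagged as needed ($3p = 2p$) is exactly the right one; you just never applied it to the witness decomposition of $u$ itself, which is where the contradiction lives.
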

		
		\begin{proof}
			\begin{enumerate}
				\item
					Since $a \in \nu{X}$, Lemma~\ref{LEMMA: Fibers of nu} tells us that $a$ is the largest element of $\nu^{-1}(a)$. Suppose we had at least two further elements $x, y \in \nu^{-1}(a)$; without loss of generality assume $x < y < a$. Then there exists $u \in X$ such that $x + u = y$. It follows that $2u \leq 2y = a$, but we cannot have $2u = a$ since that would mean $u \in \nu^{-1}(a)$ and then $y = x + u \geq 2 \min\{x,u\} = a$ (since in a linear order $\min\{x,u\} \in \{x,u\}$). Thus $2u < a$. If $x \leq 2u$, then $a = 2x \leq 4u = 2u < a$, a contradiction, so $2u < x$. Then there exists $v \in X$ such that $2u + v = x$. Lemma~\ref{LEMMA: Natural number equalities in a semiring} gives us $2 = 3$, and we get a contradiction $y = x + u = 3u + v = 2u + v = x$.
				\item
					Follows easily from the previous item.
				\item
					Suppose $2x \geq y$; then $2x = 4x \geq 2y$. On the other hand $x \leq y$ implies $2x \leq 2y$, so $2x = 2y$. But $\nu^{-1}(2y) = \{y, 2y\}$ and $x < y < 2y$, a contradiction.
				\item
					If $y \in \nu{X}$, then $x + y = y$ by Lemma~\ref{LEMMA: Fibers of nu}. Assume now $y \notin \nu{X}$; then by the previous item $2x < y$. We have some $u \in X$ with $x + u = y$. Certainly $u \leq y$. Suppose $u < y$; then $2u < y$. Since $2x, 2u \in \nu{X}$, Lemma~\ref{LEMMA: Fibers of nu} tells us that $2y = 2x + 2u \in \{2x, 2u\}$. In either case we get $2y < y$, a contradiction. Hence $u = y$, meaning that $y = x + u = x + y$.
			\end{enumerate}
		\end{proof}
		
		We can now show that when a semiring is linearly ordered, elementarity (as well as supertropicality) reduces to a simple comparison of two numbers.
		
		\begin{theorem}[Characterization of elementary linearly ordered semirings]\label{THEOREM: Characterization of elementary linearly ordered semirings}
			The following statements are equivalent for any linearly ordered upper-bound uc-semiring $X$.
			\begin{enumerate}
				\item
					The equivalent statements of Theorem~\ref{THEOREM: Characterization of elementary upper-bound semirings} hold for $X$ (in particular, $X$ is fully elementary).
				\item
					$X$ is supertropical.
				\item
					$X$ is quasiidempotent.
				\item
					$2 = 3$ in $X$.
			\end{enumerate}
		\end{theorem}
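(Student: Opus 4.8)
The plan is to take statement (3), quasiidempotency, as the hub and prove the three biconditionals $(1)\Leftrightarrow(3)$, $(2)\Leftrightarrow(3)$ and $(3)\Leftrightarrow(4)$. Two of these cost almost nothing. For $(3)\Leftrightarrow(4)$: by Proposition~\ref{PROPOSITION: Characterization of unital quasiidempotent semirings} quasiidempotency of a unital semiring amounts to $2=4$, and by Lemma~\ref{LEMMA: Natural number equalities in a semiring}(1)--(2) the equalities $2=4$ and $2=3$ are equivalent in any upper-bound unital semiring. For $(2)\Rightarrow(3)$: supertropical semirings are quasiidempotent by definition. Observe also that once (3)--(4) hold, the observation following Lemma~\ref{LEMMA: Natural number equalities in a semiring} tells us that every natural number $\geq 2$ collapses to $2$ in $X$; in particular $2^n=2$ in $X$ for every $n\geq 1$, a fact used below.

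For $(1)\Rightarrow(3)$: if $X$ is Frobenius, then $2=4$ by Lemma~\ref{LEMMA: Natural number equalities in a semiring}(3), so $X$ is quasiidempotent by Proposition~\ref{PROPOSITION: Characterization of unital quasiidempotent semirings}. For the harder converse $(3)\Rightarrow(1)$, I would show directly that $X$ is Frobenius and then invoke Theorem~\ref{THEOREM: Characterization of elementary upper-bound semirings}. Fix $n\geq 1$ and $x,y\in X$; using linearity, assume $x\leq y$. If $x=y$, then $(x+y)^n=(2x)^n=2^n x^n=2x^n=x^n+y^n$. If $x<y$, then $x+y=y$ by Lemma~\ref{LEMMA: Fibers of nu, linear version}(4), so $(x+y)^n=y^n$; since the binomial expansion of $(x+y)^n$ contains $x^n$ and $y^n$ among its summands (this is where $n\geq 1$ enters), directedness of $\leq$ gives
\[ y^n \;\leq\; x^n+y^n \;\leq\; \sum_{k=0}^{n}\binom{n}{k}x^{n-k}y^k \;=\; (x+y)^n \;=\; y^n, \]
and antisymmetry forces $x^n+y^n=(x+y)^n$. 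Hence $X$ is Frobenius.

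For $(3)\Rightarrow(2)$, I must check the two defining clauses of a supertropical semiring. Given $a,b\in X$, use linearity and trichotomy to assume $a\leq b$. If $a=b$, then $a+b=2a=\nu(a)=\nu(b)$, which is the second clause. If $a<b$, then $a+b=b$ by Lemma~\ref{LEMMA: Fibers of nu, linear version}(4), so whenever $\nu(a)\neq\nu(b)$ the first clause holds; and in the remaining case $\nu(a)=\nu(b)=:c$ I claim $b=c$. Indeed $c\in\nu{X}$ (since $\nu$ is a projection, by Proposition~\ref{PROPOSITION: Characterization of quasiidempotent semirings}), so $a,b,c\in\nu^{-1}(c)$, and $b\leq\nu(b)=c$; were $b<c$, the fiber $\nu^{-1}(c)$ would contain the three distinct elements $a<b<c$, contradicting Lemma~\ref{LEMMA: Fibers of nu, linear version}(1). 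Thus $b=c$, so $a+b=b=\nu(a)=\nu(b)$ and the second clause holds as well.

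The step I expect to be the genuine obstacle is $(3)\Rightarrow(2)$: it is where the strength of linear order is really exploited, and it rests on the delicate fiber analysis of Lemma~\ref{LEMMA: Fibers of nu, linear version} --- above all on the fact that each fiber of $\nu$ has at most two points. By comparison, the Frobenius computation in $(3)\Rightarrow(1)$ is short once one notices that $x<y$ already forces $x+y=y$, and the bookkeeping for $(3)\Leftrightarrow(4)$ and $(2)\Rightarrow(3)$ is routine.
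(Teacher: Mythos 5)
Your proof is correct and follows essentially the same route as the paper: the same reduction of everything to quasiidempotency/$2=3$ via Lemma~\ref{LEMMA: Natural number equalities in a semiring} and Proposition~\ref{PROPOSITION: Characterization of unital quasiidempotent semirings}, and the same Frobenius computation and supertropicality check resting on Lemma~\ref{LEMMA: Fibers of nu, linear version}. If anything, your treatment of the case $a<b$ with $\nu(a)=\nu(b)$ (using the two-element bound on fibers to force $b=\nu(b)$) spells out a detail the paper's proof of $(1\land 3\impl 2)$ leaves implicit.
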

		
		\begin{proof}
			\begin{itemize}
				\item\proven{$(1 \impl 3)$}
					Lemma~\ref{LEMMA: Natural number equalities in a semiring} tells us that the Frobenius property of $X$ implies $2 = 4$ in $X$.
				\item\proven{$(1 \land 3 \impl 2)$}
					Take any $a, b \in X$. If $a = b$, then $a + b = 2a = \nu(a)$. Otherwise without loss of generality $a < b$, so $a + b = b \in \{a, b\}$ by Lemma~\ref{LEMMA: Fibers of nu, linear version}.
				\item\proven{$(2 \impl 3)$}
					By definition.
				\item\proven{$(3 \impl 1)$}
					We prove the Frobenius equality for any $n \in \NN_{\geq 1}$. Take $x, y \in X$; since $X$ is linearly ordered, we may without loss of generality assume $x \leq y$. If $x = y$, then
					\[(x + y)^n = (2x)^n = 2^n x^n = 2 x^n = x^n + y^n.\]
					On the other hand, if $x < y$, then $x + y = y$ by Lemma~\ref{LEMMA: Fibers of nu, linear version}. Hence
					\[x^n + y^n \leq \sum_{k = 0}^n \binom{n}{k} x^{n-k} y^k = (x + y)^n = y^n \leq x^n + y^n.\]
					Since $X$ is upper-bound, we conclude $(x + y)^n = x^n + y^n$.
				\item\proven{$(3 \lequ 4)$}
					By Lemma~\ref{LEMMA: Natural number equalities in a semiring} the statements $2 = 4$ and $2 = 3$ are equivalent in our situation.
			\end{itemize}
		\end{proof}

	\section{Towards General Characterization of Elementarity}\label{SECTION: Towards General Elementarity}
	
		The previous two sections established the main results of the paper. This section, on the other hand, is more speculative. We consider what the scope of our results is --- we look at them from the perspective of general uc-semirings, rather than just the upper-bound ones.
		
		Recall that Theorem~\ref{THEOREM: Characterization of elementary upper-bound semirings} tells us that full elementarity is equivalent to the Frobenius property for upper-bound uc-semirings. Clearly, we do not have such an equivalence for general uc-semirings. By the fundamental theorem of symmetric polynomials over rings, every unital commutative ring is fully elementary. Of course, not every uc-ring is Frobenius.\footnote{There is a reason why the Frobenius equalities $(x + y)^n = x^n + y^n$ are informally called ``Freshman's Dream''.} There is a simple characterization for when it is.
		
		\begin{proposition}
			A uc-ring $X$ is Frobenius if and only if $x \cdot y \cdot (x + y) = 0$ for all $x, y \in X$.
		\end{proposition}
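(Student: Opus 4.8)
The plan is to prove the two implications separately; in each direction the first move is to extract that $2 = 0$ in $X$ (so $X$ has characteristic dividing $2$), and the second is to exploit the single identity $x y (x+y) = 0$, which in characteristic $2$ becomes simply $x^2 y = x y^2$.

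First I would treat the forward implication: assume $X$ is Frobenius. Writing out the Frobenius equality for $n = 2$ via the binomial theorem gives $x^2 + 2xy + y^2 = x^2 + y^2$, hence $2xy = 0$ for all $x, y \in X$; specializing to $x = y = 1$ yields $2 = 0$, whence $3 = 1$ in $X$. Then the Frobenius equality for $n = 3$ expands to $x^3 + 3x^2y + 3xy^2 + y^3 = x^3 + y^3$, i.e. $3\,xy(x+y) = 0$, which by $3 = 1$ is precisely $xy(x+y) = 0$. (So in fact only the exponents $n = 2$ and $n = 3$ are used here.)

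For the converse I would assume $xy(x+y) = 0$ for all $x, y \in X$ and establish every Frobenius equality. Taking $x = y = 1$ again yields $2 = 0$, so every even integer vanishes in $X$ and $-1 = 1$; the hypothesis therefore reads $x^2 y = -x y^2 = x y^2$. Multiplying this identity by $x^{a-2} y^{b-1}$ shows $x^a y^b = x^{a-1} y^{b+1}$ whenever $a \ge 2$ and $b \ge 1$, and iterating shows that for fixed $n \ge 2$ every monomial $x^a y^b$ with $a, b \ge 1$ and $a+b = n$ equals the single element $x y^{n-1}$. Hence, for $n \ge 2$,
\[
(x+y)^n = x^n + y^n + \sum_{k=1}^{n-1}\binom{n}{k} x^{n-k} y^k = x^n + y^n + (2^n - 2)\, x y^{n-1} = x^n + y^n,
\]
since $2^n - 2$ is even and thus zero in $X$; the case $n = 1$ is immediate. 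This gives that $X$ is Frobenius.

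The argument is entirely elementary, so I do not expect a serious obstacle; the one thing to notice is that $xy(x+y)=0$ together with $2 = 0$ collapses all the ``mixed'' degree-$n$ monomials in $x,y$ to a single element, which is exactly what renders the binomial coefficients harmless and makes the computation go through for every $n$ at once.
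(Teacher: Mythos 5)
Your proof is correct in both directions. The forward implication is essentially the paper's: the paper first invokes its Lemma on natural number equalities to get $2 = 4$, hence $1 = 3$, and then reads $x y (x+y) = 0$ off the cubic Frobenius equality, whereas you extract $2 = 0$ directly from the quadratic one; in a ring these are the same observation. The converse, however, is where you genuinely diverge. The paper proves the Frobenius equality for exponent $n$ by induction on $n$: it pairs the terms $\binom{n}{k} x^{n-k} y^k$ and $\binom{n}{n-k} x^k y^{n-k}$, rewrites their sum as $\binom{n}{k}(xy)^k (x^{n-2k} + y^{n-2k})$, applies the induction hypothesis to replace $x^{n-2k} + y^{n-2k}$ by $(x+y)^{n-2k}$, and then factors out $x y (x+y)$ to kill each pair (handling the middle term separately when $n$ is even via the evenness of $\binom{n}{n/2}$). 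You instead observe that $x y(x+y) = 0$ together with $2 = 0$ gives $x^2 y = x y^2$, which collapses \emph{every} mixed monomial $x^a y^b$ with $a, b \geq 1$ and $a + b = n$ to the single element $x y^{n-1}$; the mixed part of the binomial expansion then carries total coefficient $2^n - 2$, which is even and hence zero. This eliminates the induction entirely and replaces the term-by-term pairing with a single parity count, at the small cost of the extra (easy) monomial-collapsing lemma. Both arguments use the ring structure in the same essential ways (cancellation in the forward direction, $2 = 0$ in the converse), so neither is more general than the other; yours is arguably the more economical of the two.
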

		
		\begin{proof}
			\begin{itemize}
				\item\proven{$(\impl)$}
					Suppose $X$ is a Frobenius ring. We have $2 = 4$ by Lemma~\ref{LEMMA: Natural number equalities in a semiring}, and we can subtract $1$ on both sides to get $1 = 3$. Thus
					\[x^3 + y^3 = (x + y)^3 = x^3 + 3 x^2 y + 3 x y^2 + y^3 = x^3 + x^2 y + x y^2 + y^3 = x^3 + x y (x + y) + y^3.\]
					Subtracting $x^3 + y^3$ on both sides yields the desired equation.
				\item\proven{$(\revimpl)$}
					First apply the assumed equation for $x = y = 1$ to get $2 = 0$. Take now any $n \in \NN_{\geq 1}$; we prove the Frobenius equality for the exponent $n$ by induction on $n$. Of course, the Frobenius equality for $n = 1$ always holds.
					
					As for the induction step, assume that the Frobenius equalities hold for exponents, smaller than $n$ (but larger than $0$). We have $(x + y)^n = \sum_{k = 0}^n \binom{n}{k} x^{n-k} y^k$. If $n$ is even, we have the middle binomial coefficient $\binom{n}{n/2} = \binom{n-1}{n/2 - 1} + \binom{n-1}{n/2} = 2 \cdot \binom{n-1}{n/2}$ which is even, so equal to $0$ in $X$. All other terms (regardless of the parity of $n$) come in pairs, and for $0 < k < n/2$ we have
					\[\binom{n}{k} x^{n-k} y^k + \binom{n}{n-k} x^k y^{n-k} = \binom{n}{k} (x y)^k (x^{n-2k} + y^{n-2k}) =\]
					\[= \binom{n}{k} (x y)^k (x + y)^{n-2k} = x y (x + y) \binom{n}{k} (x y)^{k-1} (x + y)^{n-2k-1} = 0.\]
					Thus what we are left with is $(x + y)^n = \binom{n}{0} x^n + \binom{n}{n} y^n = x^n + y^n$.
			\end{itemize}
		\end{proof}
		
		Thus not every fully elementary semiring is Frobenius. What about the other implication? We do not yet know the answer, so we pose this as a question.
		
		\begin{question}
			Is every Frobenius semiring fully elementary?
		\end{question}
		
		The problem with answering such a question is that, at least in rings, and presumably in many other semirings as well, the Frobenius property is entirely incidental to elementarity. On the other hand, it proved to be crucial for upper-bound semirings. We now make it explicit why this was the case.
		
		\begin{lemma}\label{LEMMA: Symhomomorphic semirings}
			Let $X$ be a uc-semiring. The following statements are equivalent.
			\begin{enumerate}
				\item
					$X$ is Frobenius and $2 = 3$ in $X$.
				\item
					$X$ satisfies the statement of Lemma~\ref{LEMMA: Variant Frobenius}.
				\item
					$X$ satisfies the statement of Lemma~\ref{LEMMA: Factorization of elementary piece (Frobenius)}
				\item
					$X$ satisfies the statement of Lemma~\ref{LEMMA: Elementary factorization of a segment}
				\item
					For every $n \in \NN$ the map $\mms$ is a monoid homomorphism from the additive monoid of decreasing sequences of length $n$ to the multiplicative monoid of polynomial functions in $n$ variables over $X$.
			\end{enumerate}
		\end{lemma}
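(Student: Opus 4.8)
The plan is to close the cycle $(1)\Rightarrow(2)\Rightarrow(3)\Rightarrow(4)\Rightarrow(1)$ and to prove $(4)\Leftrightarrow(5)$ separately. The implications $(2)\Rightarrow(3)$ and $(3)\Rightarrow(4)$ are essentially free: the proof of Lemma~\ref{LEMMA: Factorization of elementary piece (Frobenius)} uses only Lemma~\ref{LEMMA: Factorization of elementary piece (general)} (which is purely combinatorial and holds over every uc-semiring) and the \emph{conclusion} of Lemma~\ref{LEMMA: Variant Frobenius}, and the proof of Lemma~\ref{LEMMA: Elementary factorization of a segment} merely iterates Lemma~\ref{LEMMA: Factorization of elementary piece (Frobenius)}; antisymmetry of $\leq$ enters those proofs only through the upper-bound hypothesis needed to establish Lemma~\ref{LEMMA: Variant Frobenius} in the first place, so if one simply \emph{assumes} the Variant-Frobenius conclusion (that is, assumes (2)), both go through for any uc-semiring. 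Hence the real work is $(1)\Rightarrow(2)$, $(4)\Rightarrow(1)$ and $(4)\Leftrightarrow(5)$, and I expect $(1)\Rightarrow(2)$ to be the only delicate step, since the original proof of Lemma~\ref{LEMMA: Variant Frobenius} squeezed the middle term between two equal bounds using antisymmetry, which is now unavailable and has to be replaced by an algebraic trade.

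For $(1)\Rightarrow(2)$, fix $n\geq 2$ and put $A\dfeq x^n+y^n$ and $t\dfeq x^{n-1}y$. Because $2=3$ in $X$ forces every natural number $\geq 2$ to equal $2$ there, and every interior binomial coefficient $\binom{n}{k}$ with $1\leq k\leq n-1$ is at least $n\geq 2$, the binomial expansion reads $(x+y)^n=A+2T$ in $X$, where $T\dfeq\sum_{k=1}^{n-1}x^{n-k}y^k$. Write $T=t+R$ with $R\dfeq\sum_{k=2}^{n-1}x^{n-k}y^k$ (empty when $n=2$); using $(x+y)^n=A$ from the Frobenius property, this becomes $A=A+2t+2R$. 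Now compute $A+3t+2R$ in two ways: peeling off one copy of $t$ gives $A+3t+2R=(A+2t+2R)+t=A+t$, while $3t=2t$ in $X$ (multiply the equality $3=2$ by $t$) gives $A+3t+2R=A+2t+2R=A$. Therefore $A+t=A$, that is, $x^n+x^{n-1}y+y^n=x^n+y^n$. The point is that $2=3$ lets a single superfluous copy of the middle term be paid for by the pair of copies that the Frobenius expansion has already absorbed into $A$.

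For $(4)\Leftrightarrow(5)$, note that the sum of two decreasing length-$n$ sequences is again decreasing and $\mms(0,\ldots,0)=1$ is the multiplicative unit, so (5) amounts to $\mms(d+e)=\mms(d)\,\mms(e)$ for all decreasing $d,e$. Given (4), the exponents in $\mms(d)=\el{1}^{d_1-d_2}\cdots\el{n-1}^{d_{n-1}-d_n}\el{n}^{d_n}$ are additive in $d$ and the $\el{i}$ commute, so $\mms(d)\mms(e)=\mms(d+e)$; conversely, a decreasing $d$ equals the telescoping sum $\sum_{i=1}^{n-1}(d_i-d_{i+1})\,\varepsilon^{(i)}+d_n\,\varepsilon^{(n)}$, where $\varepsilon^{(i)}$ is $i$ ones followed by zeros, and applying a homomorphic $\mms$ together with $\mms(\varepsilon^{(i)})=\el{i}$ recovers (4). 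Finally, for $(4)\Rightarrow(1)$: specializing (4) to $n=2$ and $d=(N,0)$ gives $x^N+y^N=\mms(N,0)=\el{1}^N=(x+y)^N$ as polynomial functions, for every $N\geq 1$, so $X$ is Frobenius and hence $2=4$ in $X$ by Lemma~\ref{LEMMA: Natural number equalities in a semiring}; specializing (4) to $n=3$ and $d=(2,1,0)$ gives $\mms(2,1,0)=\el{1}\el{2}$, and evaluating both sides at $x=y=z=1$ yields $6=9$ in $X$ (the left side is $|S_3^{(2,1,0)}|=6$, the right side is $3\cdot 3$), which together with $2=4$ forces $2=3$. This closes the cycle.
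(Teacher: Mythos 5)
Your proposal is correct and takes essentially the same approach as the paper: the same observation that the proofs of Lemmas~\ref{LEMMA: Factorization of elementary piece (Frobenius)} and~\ref{LEMMA: Elementary factorization of a segment} use antisymmetry only through Lemma~\ref{LEMMA: Variant Frobenius}, the same ``$2=3$ absorbs an extra copy of $x^{n-1}y$'' trick for $(1)\Rightarrow(2)$ (the paper phrases it as $\binom{n}{1}=n=n+1$), and the same evaluation at $(1,1,1)$ giving $6=9$ to recover $2=3$. The only difference is organizational: you close the cycle via $(4)\Rightarrow(1)$ together with a direct telescoping proof of $(5)\Rightarrow(4)$, whereas the paper proves $(5)\Rightarrow(1)$; both are fine.
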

		
		\begin{proof}
			\begin{itemize}
				\item\proven{$(1 \impl 2)$}
					We reprove Lemma~\ref{LEMMA: Variant Frobenius}, but this time only under the assumption that $X$ is Frobenius and $2 = 3$ in $X$.
					
					Take any $x, y \in X$ and $n \in \NN_{\geq 2}$. Then $\binom{n}{1} = n = n + 1$ in $X$. By Frobenius we get
					\[x^n + y^n = (x + y)^n = \sum_{k = 0}^n \binom{n}{k} x^{n-k} y^k =\]
					\[= x^{n-1} y + \sum_{k = 0}^n \binom{n}{k} x^{n-k} y^k = x^{n-1} y + (x + y)^n = x^n + x^{n-1} y + y^n.\]
				\item\proven{$(2 \impl 3)$}
					Note that the proof of Lemma~\ref{LEMMA: Factorization of elementary piece (Frobenius)} does not use the fact that $X$ is upper-bound directly. It relies on Lemma~\ref{LEMMA: Factorization of elementary piece (general)} (applicable for general uc-semirings) and Lemma~\ref{LEMMA: Variant Frobenius}.
				\item\proven{$(3 \impl 4)$}
					The proof of Lemma~\ref{LEMMA: Elementary factorization of a segment} does not use the upper-boundedness of $X$ directly. It relies on Lemma~\ref{LEMMA: Factorization of elementary piece (Frobenius)}.
				\item\proven{$(4 \impl 5)$}
					By assumption we have $\mms(d) = \el{1}^{d_1 - d_2} \cdot \ldots \cdot \el{n-1}^{d_{n-1} - d_n} \cdot \el{n}^{d_n}$ for any decreasing $d \in \NN^n$. Applying this for $d'$, $d''$, $d' + d''$, we get
					\[\mms(d' + d'') =  \el{1}^{(d'_1 + d''_1) - (d'_2 + d''_2)} \cdot \ldots \cdot \el{n-1}^{(d'_{n-1} + d''_{n-1}) - (d'_n + d''_n)} \cdot \el{n}^{d'_n + d''_n} =\]
					\[= \big(\el{1}^{d'_1 - d'_2} \cdot \ldots \cdot \el{n-1}^{d'_{n-1} - d'_n} \cdot \el{n}^{d'_n}\big) \cdot \big(\el{1}^{d''_1 - d''_2} \cdot \ldots \cdot \el{n-1}^{d''_{n-1} - d''_n} \cdot \el{n}^{d''_n}\big) = \mms(d') \cdot \mms(d'').\]
				\item\proven{$(5 \impl 1)$}
					Take any $n \in \NN_{\geq 1}$ and $x, y \in X$. We get\footnote{One can make the case that this argument is the essence, why we do not include $n = 0$ in the Frobenius property (Definition~\ref{DEFINITION: Frobenius}): $\mms(n,0)$ has two terms for $n \geq 1$, and only one for $n = 0$.}
					\[x^n + y^n = \mms(n,0) = \mms\big((1,0) + \ldots + (1,0)\big) = \mms(1,0) \cdot \ldots \cdot \mms(1,0) = \big(\mms(1,0)\big)^n = (x + y)^n.\]
					Thus $X$ is Frobenius, and Lemma~\ref{LEMMA: Natural number equalities in a semiring} tells us that therefore natural numbers, greater than or equal to $2$, are the same in $X$ if they have equal parity. Hence
					\[2 = 6 = \mms(2,1,0)(1,1,1) = \mms(1,0,0)(1,1,1) \cdot \mms(1,1,0)(1,1,1) = 3 \cdot 3 = 9 = 3.\]
			\end{itemize}
		\end{proof}
		
		In view of the last of the equivalent statements of the lemma, we introduce the following definition.
		
		\begin{definition}\label{DEFINITION: Symhomomorphic semirings}
			A uc-semiring is \df{symhomomorphic} when it satisfies the equivalent statements of Lemma~\ref{LEMMA: Symhomomorphic semirings}.
		\end{definition}
		
		The point is that it is the symhomomorphic property which captures the essence of the idea of the proof of elementarity in this paper. In fact, we can distill the results of Section~\ref{SECTION: Elementarity in Upper-bound Semirings} into the following corollary.
		
		\begin{corollary}
			Let $X$ be a uc-semiring.
			\begin{enumerate}
				\item
					If $X$ is upper-bound and Frobenius, it is symhomomorphic.
				\item
					If $X$ is symhomomorphic, it is fully elementary.
			\end{enumerate}
		\end{corollary}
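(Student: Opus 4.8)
The plan is to observe that both claims fall out of the machinery already developed, with essentially no new work: the corollary is a repackaging that isolates the \emph{symhomomorphic} hypothesis as the true engine behind the elementarity argument. Accordingly I would keep the proof short and refer back to the lemmas of this section and of Section~\ref{SECTION: Elementarity in Upper-bound Semirings}.

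For the first claim, suppose $X$ is upper-bound and Frobenius. Then Lemma~\ref{LEMMA: Variant Frobenius} applies verbatim and yields $x^n + x^{n-1} y + y^n = x^n + y^n$ for all $x, y \in X$ and all $n \in \NN_{\geq 2}$ --- but this is precisely statement~(2) of Lemma~\ref{LEMMA: Symhomomorphic semirings} ($X$ satisfies the statement of Lemma~\ref{LEMMA: Variant Frobenius}). By the equivalences proved in that lemma, $X$ then satisfies all of the listed conditions, so $X$ is symhomomorphic by Definition~\ref{DEFINITION: Symhomomorphic semirings}. (Note one should use statement~(2), not statement~(1): upper-bound plus Frobenius gives $2 = 4$ but not $2 = 3$ in general.)

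For the second claim, suppose $X$ is symhomomorphic. Then $X$ satisfies statement~(4) of Lemma~\ref{LEMMA: Symhomomorphic semirings}, i.e. the conclusion of Lemma~\ref{LEMMA: Elementary factorization of a segment}: for every $n \in \NN$ and every decreasing $d = (d_1, \ldots, d_n) \in \NN^n$ we have $\mms(d) = \el{1}^{d_1 - d_2} \cdot \ldots \cdot \el{n-1}^{d_{n-1} - d_n} \cdot \el{n}^{d_n}$. Now fix $n$ and take any symmetric polynomial $p$ in $n$ variables; as recalled in Section~\ref{SECTION: Preliminaries}, $p$ can be written as a linear combination $p = \sum_{k=1}^m a_k \, \mms(d_k)$ of minimal symmetric segments, and since $\mms(d_k)$ is unchanged by permuting the entries of $d_k$ we may assume each $d_k$ is decreasing. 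Substituting the factorization above for each $\mms(d_k)$ expresses $p$ as a polynomial (with the $a_k$ as coefficients) in $\el{1}, \ldots, \el{n}$, which is exactly what $n$-elementarity demands; as $n$ was arbitrary, $X$ is fully elementary. This is the same reasoning as the $(1 \impl 2)$ step of Theorem~\ref{THEOREM: Characterization of elementary upper-bound semirings}, except that the factorization of segments is now supplied by the symhomomorphic hypothesis rather than by upper-boundedness together with the Frobenius property.

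There is no real obstacle here: the content resides entirely in the earlier lemmas, and the corollary amounts to tracing which of the equivalent conditions of Lemma~\ref{LEMMA: Symhomomorphic semirings} feeds each direction --- statement~(2) for the first claim, statement~(4) for the second. The only point that warrants a moment's care is the harmless observation that reordering the exponent sequence $d$ does not affect $\mms(d)$, so the factorization of Lemma~\ref{LEMMA: Elementary factorization of a segment} can legitimately be applied to each segment occurring in $p$.
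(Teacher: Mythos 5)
Your proof is correct, and for the second claim it coincides with the paper's: the $(1 \impl 2)$ step of Theorem~\ref{THEOREM: Characterization of elementary upper-bound semirings} uses only the factorization of Lemma~\ref{LEMMA: Elementary factorization of a segment}, which condition~(4) of Lemma~\ref{LEMMA: Symhomomorphic semirings} supplies; your explicit unwinding of that step is fine. For the first claim you enter the equivalences of Lemma~\ref{LEMMA: Symhomomorphic semirings} through condition~(2) (via Lemma~\ref{LEMMA: Variant Frobenius}), whereas the paper enters through condition~(1): by Lemma~\ref{LEMMA: Natural number equalities in a semiring}, the Frobenius property gives $2 = 4$, and upper-boundedness then upgrades this to $2 = 3$, so $X$ is Frobenius with $2 = 3$. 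Both routes are valid. However, your parenthetical justification for preferring condition~(2) --- that upper-bound plus Frobenius yields $2 = 4$ but not $2 = 3$ in general --- is false: items~(3) and~(2) of Lemma~\ref{LEMMA: Natural number equalities in a semiring} give precisely $2 = 4$ and then, by antisymmetry of the intrinsic order, $2 = 3$. This error does not damage your argument, since condition~(2) does hold and suffices, but the remark should be deleted (or replaced by the observation that either entry point works).
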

		
		\begin{proof}
			\begin{enumerate}
				\item
					By Lemmas~\ref{LEMMA: Natural number equalities in a semiring}~and~\ref{LEMMA: Symhomomorphic semirings}.
				\item
					The proof of $(1 \impl 2)$ in Theorem~\ref{THEOREM: Characterization of elementary upper-bound semirings} relies only on Lemma~\ref{LEMMA: Elementary factorization of a segment}.
			\end{enumerate}
		\end{proof}
		
		Of course, this is just the part that Frobenius implies full elementarity. Theorem~\ref{THEOREM: Characterization of elementary upper-bound semirings} states that for upper-bound semirings the converse is also true. How critical is the upper-boundedness for this argument?
		
		\begin{question}
			For upper-bound uc-semirings the Frobenius property and full elementarity are equivalent. Is there a larger natural class of semirings where this is still true?
		\end{question}
		
		At first glance one might be optimistic and speculate that between characterizations of full elementarity in upper-bound uc-semirings and in uc-rings, one will get a characterization for general uc-semirings. The point is that upper-bound semirings are in a sense ``orthogonal'' to rings. We have already mentioned that the only upper-bound ring is the trivial one. Moreover, for every semiring $X$ (and the aforementioned equivalence relation $a \approx b \iff a \leq b \land b \leq a$) we have a short exact sequence
		\[\{0\} \to [0] \hookrightarrow X \twoheadrightarrow X/_\approx \to \{0\}\]
		where $X/_\approx$ is an upper-bound semiring and the equivalence class $[0] = \st{x \in X}{0 \approx x}$ is a ring. That is, any uc-semiring can be split into an upper-bound semiring and a uc-ring in this way.
		
		Unfortunately, this approach turns out to be rather limited. Exact sequences are useful for categories which are abelian (or at least additive), which semirings decidedly are not. In particular, it is not even true that if $[0]$ is trivial, $X$ is necessarily upper-bound.
		
		A simple example is $\NN_{2=4} \dfeq \NN/_\equ$ where $\NN$ has the usual semiring operations and $\equ$ is the smallest congruence such that $2 \equ 4$ (that is, we identify any numbers $\geq 2$ which have the same parity). The underlying set of $\NN_{2=4}$ is $\{[0], [1], [2], [3]\}$, and we have $[0] \leq [1] \leq [2] \leq [3] \leq [2]$. The uc-semiring $\NN_{2=4}$ is Frobenius, but not upper-bound or symhomomorphic.
		
		Is it fully elementary? It turns out that the answer is yes. Recall from Lemma~\ref{LEMMA: Factorization of elementary piece (general)} that
		\[\mms(d) \cdot \el{k} = \mms(d_1 + 1, \ldots, d_k + 1, 0, \ldots, 0) + \sum_{j = k+1}^n \sum_{\alpha \in I^d_j} a_{j,\alpha} \mms(d + \alpha).\]
		In $\NN_{2=4}$ we can transfer the big sum to the other side of the equation:
		\[\mms(d_1 + 1, \ldots, d_k + 1, 0, \ldots, 0) = \mms(d) \cdot \el{k} + \sum_{j = k+1}^n \sum_{\alpha \in I^d_j} a_{j,\alpha} \mms(d + \alpha)\]
		(it turns out that when a term in the big sum is not zero, the numbers are large enough that only parity matters).
		
		From here a suitable induction allows us to reduce all minimal symmetric segments to polynomials in elementary symmetric polynomials. In fact, the argument is essentially the same as in the fundamental theorem of symmetric polynomials over rings, where we get
		\[\mms(d_1 + 1, \ldots, d_k + 1, 0, \ldots, 0) = \mms(d) \cdot \el{k} - \sum_{j = k+1}^n \sum_{\alpha \in I^d_j} a_{j,\alpha} \mms(d + \alpha)\]
		and proceed from there.
		
		Recall from Lemma~\ref{LEMMA: Factorization of elementary piece (Frobenius)} that in the case of Frobenius upper-bound uc-semirings we were able to write
		\[\mms(d_1 + 1, \ldots, d_k + 1, 0, \ldots, 0) = \mms(d) \cdot \el{k} + 0 \cdot \sum_{j = k+1}^n \sum_{\alpha \in I^d_j} a_{j,\alpha} \mms(d + \alpha).\]
		Is the ability to write these expressions in these forms merely a coincidence or is there something more to it?
		
		\begin{question}
			Is it the case for every fully elementary semiring $X$ that we have
			\[\mms(d_1 + 1, \ldots, d_k + 1, 0, \ldots, 0) = \mms(d) \cdot \el{k} + u \cdot \sum_{j = k+1}^n \sum_{\alpha \in I^d_j} a_{j,\alpha} \mms(d + \alpha)\]
			for some $u \in X$?
		\end{question}
		
		Above we made the case for this question by examples, but a theoretical case can also be made. If we drop the upper-boundedness condition, the proof of $(4 \impl 1)$ in Theorem~\ref{THEOREM: Characterization of elementary upper-bound semirings} still tells us that $x^n + y^n \approx (x + y)^n$ for all $n \in \NN$ and $x, y \in X$ where $X$ is a fully elementary (or at least $2$-elementary) uc-semiring. Take $n = 2$ and $x = y = 1$ to get $2 \approx 4$. Hence $2 \leq 3 \leq 4 \leq 2$, so also $2 \approx 3$. Thus there exists $u \in X$ such that $3 + u = 2$.
		
		The idea is that this kind of $u$ might ``play the role of $-1$'' sufficiently well. The solution to the equation $3 + u = 2$ may not be unique, but with some luck, given any
		\[\mms(d) \cdot \el{k} = \mms(d_1 + 1, \ldots, d_k + 1, 0, \ldots, 0) + \sum_{j = k+1}^n \sum_{\alpha \in I^d_j} a_{j,\alpha} \mms(d + \alpha)\]
		(as per Lemma~\ref{LEMMA: Factorization of elementary piece (general)}), we could find a solution which allows us to take the big sum to the other side of the equation:ž\[\mms(d_1 + 1, \ldots, d_k + 1, 0, \ldots, 0) = \mms(d) \cdot \el{k} + u \cdot \sum_{j = k+1}^n \sum_{\alpha \in I^d_j} a_{j,\alpha} \mms(d + \alpha).\]
		From here, we could derive full elementarity by induction.
		
		Recall that Theorem~\ref{THEOREM: Characterization of elementary linearly ordered semirings} says (among other things) that a linearly ordered upper-bound uc-semiring is fully elementary if and only if $2 = 3$ holds in it. A positive answer to the following question would be a significant generalization.
		
		\begin{question}
			Is the condition $2 \approx 3$ in a uc-semiring not just a necessary, but also a sufficient condition for full elementarity?
		\end{question}
		
		Of course, this entire discussion is just an aspect of a more general question.
		
		\begin{question}
			Is there a meaningful theorem which classifies elementarity for arbitrary uc-semirings?
		\end{question}

		\printbibliography

\end{document}